\newcommand{\defeq}{\vcentcolon=}
\newtheorem{theorem}{Theorem}[section]
\newtheorem{corollary}[theorem]{Corollary}
\newtheorem{lemma}[theorem]{Lemma}
\newtheorem{proposition}[theorem]{Proposition}
\newtheorem{conjecture}[theorem]{Conjecture}
\newtheorem{definition}[theorem]{Definition}
\newtheorem{example}[theorem]{Example}
\newcommand\inv{\operatorname{Inv}}
\DeclareMathOperator{\sh}{sh}
\newcommand{\transp}[1]{s_{#1}}
\newcommand{\inner}{\overline{\mu}}
\renewcommand{\outer}{\overline{\nu}}
\begin{document}

\title{The number of inversions of permutations with fixed shape}
\date{\today}
\author{Arvind Ayyer} 
\address{Arvind Ayyer\\
Department of Mathematics, Indian Institute of Science\\ Bangalore 560012, India}
\email{arvind@iisc.ac.in}

\author{Naya Banerjee}
\address{Naya Banerjee\\
Department of Mathematical Sciences, University of Delaware\\
Newark, DE, 19716, USA
}
\email{naya@udel.edu}

\subjclass[2010]{05A05; 05A15; 05A17}
\keywords{Robinson-Schensted correspondence; inversions; minimal permutations; jump partitions}

\begin{abstract}
The Robinson-Schensted correspondence naturally induces a map from permutations to partitions.
In this work, we study the number of inversions of permutations corresponding to a fixed partition $\lambda$ under this map. 
Hohlweg characterized permutations having shape $\lambda$ with the minimum number of inversions. 
Here, we give the first results in this direction for higher numbers of inversions. 
We give explicit conjectures for both the structure and the number of permutations associated to $\lambda$ where the extra number of inversions is less than the length of the smallest column of $\lambda$.
We prove the result when $\lambda$ has two columns.
\end{abstract}

\maketitle

\section{Introduction}
The Robinson-Schensted (RS) correspondence is a remarkable bijection mapping permutations in $S_n$ to pairs of standard Young tableaux of the same shape. 
It was discovered independently by Robinson \cite{Robinson38} and Schensted
\cite{Schensted61}. This bijection is intimately related to the 
representation theory of the symmetric group \cite{JamesKerber81,Dia88}, the theory of 
symmetric functions \cite[Chapter 7]{Stanley1999}, and the theory of
partitions \cite{Andrews76}.
Using this correspondence, one can associate a partition to any permutation by looking at the shape of either of the tableaux. It is natural to ask what the relationship of this partition statistic is to other well-studied statistics on permutations. One well-studied statistic on permutations is the inversion number, i.e., the number of pairs of elements in the permutation that are out of the natural order.
Although it is natural to study the relationship between the inversion number and the shape of the partition corresponding to a permutation, this seems to be a difficult problem (see \cite{Reifegerste04,MuellerStarr2013}, for instance),
and there has been progress in essentially one special case. 
As of this writing, the only results were due to
Hohlweg \cite{Hohlweg2005}, and there had been no further progress on the question. Hohlweg used the theory of Kazhdan-Lusztig cells to 
determine the minimum number of inversions of permutations associated to a shape. Further, he gave a complete characterization for such minimal permutations. 
Subsequently, Han \cite{Han2005} gave a combinatorial proof of this result. 
Hohlweg also showed that these minimal permutations with shape $\lambda$ are in bijection with permutations with the maximum number of inversions in the conjugate partition $\lambda'$.

Our main result adapts Han's proof and extends Hohlweg's characterization for minimal elements associated to partitions to elements with larger numbers of inversions in the case of two-column tableaux when the number of inversions is bounded. Although our result applies to the special case of Young diagrams with two columns, we note that this is the first development on the problem in over a decade. We conjecture that our characterization does generalize to more than two columns, although this seems much more difficult to prove using the approach in this work.

The remainder of the paper is organized as follows. In Section \ref{sec:preliminaries} we summarize the relevant background and state our results. In Section \ref{sec:jumppart} we introduce a combinatorial construction we call a \textit{jump partition} that we will use to characterize the minimal permutations and their generalizations. Finally, in Section \ref{sec:two-col}, we will prove the main result.

\section{Preliminaries and Summary of Results}
\label{sec:preliminaries}

Let $n \in \mathbb N$ be a positive integer. A vector
$\lambda=(\lambda_1, \lambda_2, \dots)$ of positive integers  is a
{\em partition} of $n$ (denoted by $\lambda \vdash n$)
if
\[
\lambda_1 \ge \lambda_2 \ge \dots > 0 \ \mathrm{and}
\ \sum_{i}\lambda_i = n.
\]
The size of a partition $\lambda$ will be denoted by $|\lambda|$ and the number of non-zero parts in $\lambda$, by $\ell(\lambda)$. 
The {\em Young diagram} (or diagram) of a partition $\lambda$ is a left-justified
array of cells with $\lambda_i$ cells in the $i$-th row for each
$i\geq 1$.
For example, the diagram of the partition $(5,5,3,2)$ is
\[
\begin{Young}
& & & &\cr
& & & &\cr
& &  \cr
& \cr
\end{Young}.
\]
The {\em conjugate} of a partition $\lambda$, denoted by $\lambda'$, is
the partition whose diagram is the transpose of the diagram of $\lambda$.
A {\em standard Young tableau} ({\em SYT} or {\em tableau}) of shape $\lambda$ with
is a filling of the Young diagram of $\lambda \vdash n$ with entries from $[n] := \{1,\dots,n\}$ in such a
way that the entries are strictly increasing from left to right along every
row as well as from top to bottom along every column. The {\em shape} of a
tableau $T$, denoted $\sh(T)$ is the partition corresponding to the
diagram of $T$. For example, 
\[
\begin{Young}
1& 2 & 4  & 7 & 8 \cr
3& 6 & 10 & 12 & 13\cr
5 &9 & 14 \cr
11 & 15\cr
\end{Young}
\]
is a tableau of the diagram above. Note that the elements in
the cells of a SYT are distinct integers. 
Let
$\mathcal T_n$ denote the set of SYT of size $n$.

\subsection{The Robinson-Schensted Correspondence}
The Robinson-Schensted or RS correspondence \cite{Robinson38, Schensted61} is a 
bijection between the set of permutations $S_n$ and the set of pairs of tableau of size $n$
of the same shape. 
The definition of the bijection is through a {\em row-insertion} algorithm
first defined by Schensted \cite{Schensted61} in order to study the longest
increasing subsequence of a permutation. 
We first review the bijection.
Suppose that we have a tableau $T$. The row-insertion procedure below inserts a positive integer $x$
that is distinct from all entries of $T$, into
$T$ and results in a tableau denoted by $T \leftarrow x$.
\begin{enumerate}
\item Let $y$ be the smallest number larger than $x$ in the first row of
  $T$. Replace the cell containing $y$ with $x$. If there is no such
  $y$, add a cell containing $x$ to the end of the row.
\item If $y$ was removed from the first row, attempt to insert it into
  the next row by the same procedure as above. If there is no row to
  add $y$ to, create a new row with a cell containing $y$.
\item Repeat this procedure on successive rows until either a number
  is added to the end of a row or added in a new row at the bottom.
\end{enumerate}

The RS correspondence from $S_n$ to $\{(P,Q) \in \mathcal T_n
\times \mathcal T_n\ : \ \sh(P)=\sh(Q)\}$ can now be defined as
follows. Let $\pi \in S_n$ be written in one-line notation as
$(\pi_1,\dots,\pi_n)$. Let $P_1$ be the tableau with a single cell containing
$\pi_1$. Let $P_{j} = P_{j-1} \leftarrow \pi_j$ for all $1<j \le n$ and
set $P=P_n$. The tableau $Q$ is defined recursively in terms of tableaux $Q_i$
of size $i$ as follows. Let $Q_1$ be
the tableau with one cell containing the integer $1$. The equality of shapes 
$\sh(Q_i)=\sh(P_i)$ is maintained throughout the process. The cell of $Q_i$ containing $i$ is the
(unique) cell of $P_i$ that does not belong to $P_{i-1}$. The remaining cells of
$Q_i$ are identical to those of $Q_{i-1}$. Finally, set
$Q=Q_n$. We refer to $P$ as the {\em insertion tableau} and $Q$ is
the {\em recording tableau}.

Let $\pi \in S_n$ and let $(P,Q)$ be the corresponding tableaux under
the RS correspondence. The {\em shape of $\pi$}, denoted $\sh(\pi)$, is given by $\sh(P)=\sh(Q)$.
We will also say that a partition $\lambda$ is {\em associated to} a permutation $\pi$ if 
$\lambda = \sh(\pi)$. 
A fundamental property about the RS
correspondence is that if $\pi$ is associated to $(P,Q)$, then $\pi^{-1}$
is associated to $(Q,P)$.
See \cite{Fulton97,Mac98,Knu73} or \cite{Stanley1999} for numerous interesting properties of the bijection.

\subsection{Main Results}
Let $P_c(n)$ denote the set of partitions of $n$
into parts of $c$ colors or types, and let $p_{c}(n) = \# P_{c}(n)$.
For example, $P_2(2) = \{ 2, \bar{2}, 11, 1 \bar{1}, \bar{1} \bar{1} \}$, 
where we think of the two colors as unbarred and barred integers.
Further, let $p(n) \equiv p_{1}(n)$ denote the number of
partitions of $n$. 
{Since any $c$-colored partition of $n$ can be obtained by writing $n$
as a sum of nonnegative integers $n_1 + \cdots + n_c$ and forming a partition of $n_i$ with the $i$'th colour, we have the recursion
\begin{equation}
\label{pc-recur}
p_c(n) = \sum_{n_1 + \cdots + n_c = n} p(n_1) \cdots p(n_c).
\end{equation}
Therefore, the generating function of $p_{c}(n)$ is given by the 
$c$-fold convolution
\begin{align}
\sum_{n \geq 0} p_{c}(n) x^{n} = \prod_{m \geq 1}\frac 1{(1-x^{m})^{c}}. 
\end{align}
See, for example ~\cite[Sequence A000712]{OEIS}  for $c=2$.
}

Let $\lambda$ denote the partition of $n$ such that its conjugate is given in the frequency notation as $\lambda' = \langle t_1^{m_1}, \dots, t_k^{m_k} \rangle$, where $t_1>\cdots > t_k>0$ and $m_i>0$ is the multiplicity of $t_i$. In other words, $\lambda'$ has the part $t_1$ occurring $m_1$ times, $t_2$ occurring $m_2$ times, and so on.
Therefore $n = \sum_{i=1}^k t_i m_i$. We will also frequently need the sum of the multiplicities, which we will denote $M = \ell(\lambda') = \sum_{i=1}^k m_i$. 

We say that a permutation associated to $\lambda$ is {\em minimal} for the shape $\lambda$ if it has the minimal number of inversions.
Let $W_\lambda$ be the set of all permutations with shape $\lambda$ and $W_\lambda^\Delta$ to be the set of permutations associated to $\lambda$ with exactly $\Delta$ more inversions than the minimal one so that
$\cup_{\Delta \geq 0} W_\lambda^\Delta = W_\lambda$. 
Let $w_\lambda^\Delta = \# W_\lambda^\Delta$. 
Hohlweg has proved the following results for minimal permutations.

\begin{theorem}[{\cite{Hohlweg2005}}]
\label{thm:holhweg}
Let $\lambda$ be the shape given by $\lambda'=\langle t_1^{m_1}, \dots, t_k^{m_k} \rangle = (\lambda'_1,\dots,\lambda'_M)$. Then the following results hold for minimal permutations with shape $\lambda$.
\begin{enumerate}
\item The number of inversions of such a minimal permutation is 
\[
\sum_{i=1}^M \binom {\lambda'_i}2.
\]

\item The number of minimal permutations is given by
\[
w_\lambda^0 =  \binom M {m_1,\dots,m_k}.
\]

\item The minimal permutations can be characterized as follows.
Let $c=(c_1,\ldots,\allowbreak c_M)$ be a permutation (possibly with repeats) of $(\lambda'_1,\dots,\lambda'_M)$. In one-line notation, the corresponding minimal permutation (see the example after this theorem) is composed of $M$ blocks of entries from $[n]$ of lengths $c_1,\ldots,c_M$ in that order, such that the elements within each block are consecutively decreasing and the elements of the $i$'th block are smaller than the elements of the $(i+1)$'th block for each $1 \le i <M$.

\end{enumerate}
\end{theorem}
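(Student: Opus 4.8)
The plan is to prove (1) and (3) simultaneously by exhibiting an explicit family of permutations of shape $\lambda$ with exactly $\sum_{i=1}^{M}\binom{\lambda'_i}{2}$ inversions and showing that these are precisely the minimizers; part (2) is then a short count of the family.

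\emph{Constructing the candidates.} Given a rearrangement $c=(c_1,\dots,c_M)$ of the multiset $\{\lambda'_1,\dots,\lambda'_M\}$, let $\pi=\pi(c)\in S_n$ be the block permutation of part (3): the first $c_1$ positions carry $c_1,c_1-1,\dots,1$, the next $c_2$ positions carry the next $c_2$ integers in decreasing order, and so on. I would first check that $\sh(\pi)=\lambda$. Since the entries decrease within each block and increase from block to block, every decreasing subsequence of $\pi$ is contained in a single block, so a subset of the positions that is a union of $k$ decreasing subsequences meets at most $k$ blocks (one entry from each of $k+1$ distinct blocks would be an increasing subsequence of length $k+1$); its size is therefore at most $\lambda'_1+\dots+\lambda'_k$, with equality for the union of the $k$ longest blocks. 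By Greene's theorem, writing $\mu=\sh(\pi)$, these maxima equal $\mu'_1+\dots+\mu'_k$ for all $k$, so $\mu'=\lambda'$ and $\mu=\lambda$ (alternatively one verifies $\sh(\pi)=\lambda$ directly by row insertion). Counting inversions is immediate: block $j$ contributes $\binom{c_j}{2}$, there are no inversions between blocks, so $\inv(\pi)=\sum_j\binom{c_j}{2}=\sum_{i=1}^{M}\binom{\lambda'_i}{2}$. Hence the minimal number of inversions of a permutation of shape $\lambda$ is at most $\sum_i\binom{\lambda'_i}{2}$.

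\emph{Lower bound and rigidity.} For the reverse inequality I would use the standard consequence of the RS correspondence (Greene's theorem, plus the existence of a nested family of maximum unions of $k$ decreasing subsequences) that any $\pi$ with $\sh(\pi)=\lambda$ can be partitioned into decreasing subsequences $D_1,\dots,D_M$ with $|D_i|=\lambda'_i$. Each $D_i$ accounts for $\binom{\lambda'_i}{2}$ inversions of $\pi$, and these inversion sets are disjoint, so $\inv(\pi)\ge\sum_i\binom{\lambda'_i}{2}$; with the previous step this proves (1), and it shows that a minimal $\pi$ has \emph{no} inversion with its two endpoints in different $D_i$'s. I would then run the following rigidity argument. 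Let $1$ occur at position $p$; every position $q<p$ forms an inversion with $p$, hence (by minimality) lies in the same part $D_a$ as $1$, and $D_a$ cannot reach beyond $p$ because it is decreasing and contains the value $1$; thus $D_a$ occupies exactly the positions $\{1,\dots,\lambda'_a\}$. The absence of cross-inversions between the first $\lambda'_a$ positions and the rest then forces every entry of $D_a$ to be smaller than every other entry, so $D_a=\{1,\dots,\lambda'_a\}$ written in decreasing order, i.e.\ the first block of a block permutation. Deleting this block and relabeling yields a permutation of $\{\lambda'_a+1,\dots,n\}$ partitioned into the $M-1$ remaining $D_i$ with no cross-inversions, so induction on $M$ shows $\pi$ is a block permutation with block lengths some rearrangement of $\lambda'_1,\dots,\lambda'_M$. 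Thus $W^0_\lambda$ is exactly the set of block permutations, giving (3).

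\emph{Counting.} Distinct rearrangements $c$ produce distinct permutations $\pi(c)$, since at the first coordinate where two tuples differ the associated block begins with a different value. Hence $w^0_\lambda$ equals the number of rearrangements of $\langle t_1^{m_1},\dots,t_k^{m_k}\rangle$, which is $\binom{M}{m_1,\dots,m_k}$, proving (2).

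I expect the main obstacle to be the structural input in the second step — that a shape-$\lambda$ permutation decomposes into decreasing subsequences of lengths \emph{exactly} $\lambda'_1,\dots,\lambda'_M$, rather than merely into $M$ decreasing subsequences. If one is content to cite Greene's theorem (and the nesting of optimal unions) this is quick, but a self-contained, Han-style argument would instead build such a decomposition explicitly — for instance by a greedy patience-sorting of $\pi$ into decreasing piles — and prove that the pile sizes are forced to be $\lambda'_1,\dots,\lambda'_M$; keeping that bookkeeping correct, especially when a part of $\lambda'$ occurs with multiplicity, is where the genuine effort goes.
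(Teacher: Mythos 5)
This theorem is quoted from Hohlweg and never proved in the paper (Hohlweg's original argument uses Kazhdan--Lusztig cells, and Han's later proof is combinatorial), so your attempt has to stand on its own. Most of it does: the block construction, the Greene's-theorem computation showing the block permutations have shape $\lambda$, the rigidity argument deducing the block structure from the absence of cross-inversions, and the final multinomial count are all correct. The genuine gap is exactly the step you flag, namely that every $\pi$ with $\sh(\pi)=\lambda$ admits a partition into decreasing subsequences of sizes \emph{exactly} $\lambda'_1,\dots,\lambda'_M$, and neither route you sketch closes it. Even granting a nested family $U_1\subseteq U_2\subseteq\cdots\subseteq U_M$ of maximum unions of $1,2,\dots$ decreasing subsequences, the difference $U_k\setminus U_{k-1}$ has cardinality $\lambda'_k$ but there is no reason it is itself a decreasing subsequence, and without that you cannot charge $\binom{\lambda'_k}{2}$ inversions to it. The self-contained fallback is false as stated: greedy patience sorting into decreasing piles does \emph{not} force pile sizes $\lambda'_1,\dots,\lambda'_M$. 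For $\pi=(2,4,3,1)$ one has $\sh(\pi)=(2,1,1)$, so $\lambda'=(3,1)$, yet the greedy piles are $\{2,1\}$ and $\{4,3\}$, of sizes $(2,2)$; a $(3,1)$-decomposition does exist here ($\{4,3,1\}$ and $\{2\}$), but the greedy procedure misses it, so an existence proof is still owed. The stakes are quantitative: a partition into $M$ decreasing subsequences with sizes $(d_i)$ merely dominated by $\lambda'$ yields only $\inv(\pi)\ge\sum_i\binom{d_i}{2}$, which by convexity is \emph{weaker} than the bound you need. The decomposition statement is, I believe, true (it amounts to a completely saturated partition for a two-dimensional poset), but it is a theorem in its own right, not a one-line corollary of Greene.

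The lower bound can be repaired without the exact-size decomposition. Let $d(i)$ be the length of the longest decreasing subsequence of $\pi$ ending at position $i$. Each position $i$ is the right endpoint of at least $d(i)-1$ inversions, so $\inv(\pi)\ge\sum_i (d(i)-1)=\sum_{j\ge 2}\#\{i : d(i)\ge j\}$. The level sets $\{i : d(i)=j\}$ for $j\le J-1$ are $J-1$ increasing subsequences, so by Greene's theorem their union has size at most $\lambda_1+\cdots+\lambda_{J-1}$, whence $\#\{i : d(i)\ge J\}\ge \sum_{m\ge J}\lambda_m$. Summing over $J\ge 2$ gives $\inv(\pi)\ge\sum_m (m-1)\lambda_m=\sum_i\binom{\lambda'_i}{2}$, which together with your construction proves part (1). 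You would then need to rerun the rigidity analysis for part (3) through the equality case of this argument, since your current version of rigidity presupposes the unproved decomposition.
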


Minimal permutations are also known as {\em layered permutations}~\cite{Bona1999}. 
For example, $(2,1 \,|\, 3\, |\, 6,5,4\, |\, 8,7)$ is a minimal permutation for the shape $\lambda = (4,3,1)$
corresponding to the permutation $c = (2,1,3,2)$ of $\lambda' = (3,2,2,1)$. 
We place dividing bars between various blocks for ease of reading.
{As can be seen from the characterization in \textit{(3)} above,} minimal permutations are involutions. 

We are interested in the case where $\Delta$ is positive.
Let us first look at the case when $M=1$. Then $k=m_1 =1$ and $t_1 = n$ and $\lambda' = \langle 1^n \rangle$. Since $\lambda = (n)$ consists of a single row, there is exactly one permutation which has that shape by the RS correspondence, namely the identity permutation with inversion number 0. Therefore 
\[
w_{(n)}^\Delta = 
\begin{cases}
1, & \text{if } \Delta = 0, \\
0, & \text{if } \Delta > 0.
\end{cases}
\]
The first nontrivial result will be when $M=2$. Assume the notation above.

\begin{theorem}\label{thm:two-col}
Let $\lambda \vdash n$ with $\lambda' = (s,r)$, and let $\Delta < r$. Then,
\[
w^{\Delta}_\lambda = p_{2} (\Delta) \times
\begin{cases}
1, & \mathrm{if} \; \ s = r, \\
2, & \mathrm{if} \; \ s > r.
\end{cases}
\]
\end{theorem}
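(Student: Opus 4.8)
The plan is to work directly with the permutations of shape $\lambda$ where $\lambda' = (s,r)$ with $s \geq r$, so that $\lambda$ has exactly two columns of heights $s$ and $r$. By Theorem~\ref{thm:holhweg}(1), the minimal number of inversions is $\binom{s}{2} + \binom{r}{2}$. I would first recall the description of two-column tableaux: a pair $(P,Q)$ of SYT of shape $\lambda$ is equivalent to a choice of which $r$ of the $n$ values occupy the second column of $P$, and likewise for $Q$ (the first column entries are then forced to be the complementary set, arranged increasingly, in each tableau). So $w_\lambda = \binom{n}{r}^2$ in total, and the task is to sort these $\binom{n}{r}^2$ permutations by inversion number and show the count at level $\Delta$ is $p_2(\Delta)$ or $2p_2(\Delta)$.

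**The combinatorial core.** The key step is to compute $\inv(\pi)$ in terms of the two subsets (call them $A$ for the $P$-side second column and $B$ for the $Q$-side second column, each of size $r$). Using the RS recursion for column insertion — or equivalently the known fact that for two-column shapes $\pi$ is built from two decreasing-ish blocks — I would derive a formula of the shape $\inv(\pi) = \binom{s}{2} + \binom{r}{2} + \Phi(A) + \Psi(B)$, where $\Phi$ and $\Psi$ are nonnegative "charge" statistics measuring how far $A$ and $B$ are from the extreme configuration that realizes the minimum. Concretely, writing $A = \{a_1 < \dots < a_r\}$, the natural candidate is $\Phi(A) = \sum_{j=1}^r (a_j - (n - r + j))$ or a closely related sum of gaps, which is $0$ exactly when $A = \{n-r+1, \dots, n\}$ and in general counts a weighted displacement. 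Here is where the hypothesis $\Delta < r$ enters decisively: when the total excess $\Phi(A) + \Psi(B)$ is smaller than $r$, the admissible $A$ (resp. $B$) are exactly those obtained from the top block by sliding it down as a rigid-ish object with small "defects," and the number of such $A$ with $\Phi(A) = j$ turns out to be $p(j)$ (partitions of $j$ into at most $r$ parts, but the bound $j < r$ makes the part-count restriction vacuous), giving plain $p(j)$. Summing over $j + j' = \Delta$ via the Cauchy product $\sum_\Delta \left(\sum_{j} p(j) p(\Delta - j)\right) x^\Delta = \prod (1-x^m)^{-2}$ produces $p_2(\Delta)$.

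**Accounting for the factor of $2$.** The case split $s = r$ versus $s > r$ should come from a symmetry/degeneracy. When $s > r$, the roles of "which subset goes in column one" versus "which goes in column two" are genuinely asymmetric, and I expect $A$ can independently be a "bottom-heavy" or "top-heavy" perturbation — more precisely, there are two distinct families of near-minimal configurations (mirroring Hohlweg's two minimal layered permutations $w_\lambda^0 = \binom{2}{1} = 2$ when $s > r$, versus $\binom{2}{2}=1$ when $s=r$), and the small-$\Delta$ perturbations attach to each of the two, doubling the count. When $s = r$ the two minimal permutations coincide (the shape is self-conjugate-like in the relevant sense), so no doubling occurs. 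I would make this precise by showing the map sending a near-minimal $\pi$ to $(\text{which minimal layered permutation it perturbs}, \text{the pair of partitions } (\alpha,\beta) \text{ of sizes summing to }\Delta)$ is a bijection onto $W^0_\lambda \times \{(\alpha,\beta) : |\alpha|+|\beta| = \Delta\}$, and that $\#\{(\alpha,\beta): |\alpha| + |\beta| = \Delta\} = p_2(\Delta)$.

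**Main obstacle.** The hard part will be proving that no permutation of shape $\lambda$ with $\inv(\pi) - \inv_{\min} = \Delta < r$ lies outside the structured family — i.e., the upper bound matching the lower bound. The excess-statistic formula gives the lower bound on counts easily (exhibit the configurations), but ruling out "wild" tableaux requires showing that any $A$ (or $B$) with $\Phi(A) < r$ must in fact be one of the controlled perturbations, and that the charge statistics $\Phi, \Psi$ genuinely decouple in the inversion count with no cross terms in this range. This is exactly the place where I expect to adapt Han's combinatorial argument: tracking the insertion process for two-column shapes and bounding the inversion contributions column by column, using $\Delta < r$ to prevent interactions between the column-one and column-two data. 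Verifying this decoupling carefully is the crux of the proof.
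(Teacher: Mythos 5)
There is a genuine gap, and it sits at the very center of your plan: the proposed decomposition $\inv(\pi)=\binom{s}{2}+\binom{r}{2}+\Phi(A)+\Psi(B)$, with $\Phi$ minimized uniquely at the top configuration $A=\{n-r+1,\dots,n\}$, cannot be correct when $s>r$, because it would force a \emph{unique} permutation of minimal inversion number, whereas Hohlweg's theorem (and your own ``factor of $2$'' paragraph) gives $w^0_\lambda=2$. Concretely, take $n=4$ and $\lambda'=(3,1)$, so $\lambda=(2,1,1)$. The two minimal permutations are $1432$ and $3214$, corresponding to $(A,B)=(\{2\},\{2\})$ and $(\{4\},\{4\})$ respectively; if $\inv$ were of the form $f(A)+g(B)$, then from $f(\{2\})+g(\{2\})=f(\{4\})+g(\{4\})=3$ one of the cross pairs $(\{2\},\{4\})$ or $(\{4\},\{2\})$ would also have at most $3$ inversions, contradicting $w^0_\lambda=2$. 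So the inversion number does \emph{not} decouple over the two tableaux in the coordinates you chose, and your ``combinatorial core'' and your ``factor of $2$'' discussion are mutually inconsistent. (A smaller secondary error: $w_\lambda\ne\binom{n}{r}^2$; the second-column sets must satisfy a ballot condition, so the count of two-column SYT is $\binom{n}{r}-\binom{n}{r-1}$.) Beyond this, the two steps you correctly identify as the crux --- proving the decoupling in the range $\Delta<r$ and ruling out ``wild'' configurations for the upper bound --- are exactly the steps you defer to ``adapting Han's argument'' without supplying a mechanism, so even repairing the formula would leave the proof essentially unwritten.

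For comparison, the paper avoids the $(A,B)$ coordinates entirely. It perturbs the (one or two) minimal layered permutations directly by a pair of partitions $(\mu,\nu)$ acting as explicit sequences of adjacent transpositions (``inner'' and ``outer'' jumps), and proves three things: that each such jump partition of size $\Delta<r$ can be realized by Knuth and dual Knuth moves, hence preserves the shape (this uses an auxiliary strict partition $\widetilde\mu$ and a careful sequence of $K_+$ then $K_-$ moves); that distinct pairs (minimal permutation, jump partition) yield distinct permutations, using the disjointness of the supports of $\mu$ and $\nu$ when $\Delta<r$; and --- the hardest part, playing the role of your missing upper bound --- that every $\sigma\in W_\lambda^\Delta$ arises this way, proved by a geometric analysis of the permutation diagram relative to the rectangle spanned by a longest decreasing subsequence. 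The count $p_2(\Delta)$ then falls out because for $\Delta<r$ the length and part-size constraints on $(\mu,\nu)$ are vacuous, and the factor $1$ or $2$ is just the number of minimal permutations. Your instinct that the answer is ``two independent partitions attached to each minimal permutation'' matches the paper's, but the correct independent coordinates are the jump partitions, not displacement statistics of the column sets.
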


We will prove Theorem~\ref{thm:two-col} in Section~\ref{sec:two-col}.
Our proof does not generalize to $M>2$, although we conjecture a more general result from empirical data. We make partial progress in that direction with the following lower bound.

\begin{theorem}
\label{thm:more than two col}
Fix a partition $\lambda$ such that $\lambda'=\langle t_1^{m_1}, \dots, t_k^{m_k} \rangle$ with $t_1 > \cdots > t_k > 0$ and $m_i>0$ for all $i$.
Denote $M = \sum_{i=1}^k m_i$. Then for any $\Delta< t_k$,
\begin{align*}
w^{\Delta}_\lambda \geq p_{2(M-1)} (\Delta) \binom M {m_1,\dots,m_k}.
\end{align*}
\end{theorem}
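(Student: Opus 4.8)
The plan is to produce $p_{2(M-1)}(\Delta) \binom{M}{m_1,\dots,m_k}$ distinct permutations of shape $\lambda$ each having exactly $\Delta$ more inversions than the minimum, and to do so by a ``local perturbation'' of the layered (minimal) permutations described in Theorem~\ref{thm:holhweg}(3). Fix a minimal permutation $\pi_c$ corresponding to a rearrangement $c = (c_1,\dots,c_M)$ of $(\lambda'_1,\dots,\lambda'_M)$; it consists of $M$ consecutive decreasing blocks $B_1,\dots,B_M$ with $\max B_i < \min B_{i+1}$. The $\binom{M}{m_1,\dots,m_k}$ factor will come from the distinct choices of $c$ (these give distinct shapes-of-$P$ recording data exactly as in Hohlweg's count), so the real content is to attach, to each fixed $c$, a family of $p_{2(M-1)}(\Delta)$ modifications that (i) all have shape $\lambda$, (ii) all add exactly $\Delta$ inversions, and (iii) are pairwise distinct and distinct across different $c$.

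The source of the $2(M-1)$ ``colors'' should be the $M-1$ interfaces between consecutive blocks, each contributing two independent directions in which inversions can be injected without disturbing the RS shape. Concretely, for a partition of $\Delta$ into parts of $2(M-1)$ types, assign to interface $j \in \{1,\dots,M-1\}$ two sub-partitions $\mu^{(j)}, \nu^{(j)}$ (the ``unbarred'' and ``barred'' parts at that interface) with $\sum_j (|\mu^{(j)}| + |\nu^{(j)}|) = \Delta$. The construction at interface $j$ mimics the two-column argument of Theorem~\ref{thm:two-col}: since $\Delta < t_k$, each individual piece $\mu^{(j)}$ or $\nu^{(j)}$ has all parts $< t_k \le \min(c_j, c_{j+1})$, so one can, e.g., cyclically rotate a prefix of length $\mu^{(j)}_1 + 1$ within (an appropriately chosen stretch straddling) the boundary of $B_j$ and $B_{j+1}$, and symmetrically a suffix for $\nu^{(j)}$, in such a way that each rotation of a run of length $p+1$ adds exactly $p$ inversions and leaves the insertion tableau's shape unchanged (because the rotated segment is still a single decreasing run of size below the relevant column length, so it still occupies the same two columns under row insertion). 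Iterating over the parts of $\mu^{(j)}$ and $\nu^{(j)}$, and over $j$, the added inversions total exactly $\Delta$ by design.

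The key steps, in order: (1) recall/record precisely which local moves on a decreasing run leave $\sh(P)$ invariant — here I would reuse the engine behind Theorem~\ref{thm:two-col}, isolating the statement ``rotating a decreasing block of length $\le$ (its column height) by one position to the left adds one inversion and preserves the shape'' as a lemma; (2) check that moves at different interfaces commute and their inversion contributions are additive, which is clear since for $\Delta < t_k$ the affected entries at interface $j$ stay strictly below those at interface $j+1$ and strictly above those at $j-1$, so the perturbations occupy disjoint value-ranges; (3) verify injectivity of the whole map $(c, \{\mu^{(j)}, \nu^{(j)}\}) \mapsto \pi$ — recover $c$ from the coarse block structure of $\pi$ (which survives because $\Delta$ is small), then read off each $(\mu^{(j)},\nu^{(j)})$ from the local fine structure at interface $j$; (4) count: the number of data $(c)$ is $\binom{M}{m_1,\dots,m_k}$ and the number of data $\{\mu^{(j)},\nu^{(j)}\}_{j=1}^{M-1}$ summing to $\Delta$ is exactly the number of $2(M-1)$-colored partitions of $\Delta$, namely $p_{2(M-1)}(\Delta)$; multiplying gives the bound. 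Note we only claim $\ge$, so we need not worry about whether every shape-$\lambda$ permutation with $+\Delta$ inversions arises this way.

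The main obstacle I anticipate is step (1) together with the shape-invariance bookkeeping in step (2): one must be genuinely careful that rotating within a run that straddles a block boundary still lands the moved entries in exactly the same cells of $P$ (same two columns, in the two-column-flavored local picture), and that doing this simultaneously at all $M-1$ interfaces does not create an unexpected interaction during row insertion — e.g., an entry promoted out of block $B_j$ bumping into the perturbed region of $B_{j+1}$. The hypothesis $\Delta < t_k$ is exactly what should rule this out, since it forces every perturbed run to have length strictly less than the height of every column of $\lambda$, keeping each local modification ``self-contained'' within the column structure it came from; making that precise is where the bulk of the work lies, but it is a direct, if somewhat intricate, extension of the two-column analysis already carried out for Theorem~\ref{thm:two-col}.
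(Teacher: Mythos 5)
Your overall architecture coincides with the paper's: the data is a choice of layered permutation (giving the multinomial factor) together with one ``inner'' and one ``outer'' partition at each of the $M-1$ block interfaces (giving the $2(M-1)$ colors), the hypothesis $\Delta < t_k$ makes every such tuple admissible, and shape-preservation is checked by reducing each local move to Knuth or dual Knuth transformations exactly as in the two-column case. The gap is in your step (2). It is not true that the perturbations at different interfaces occupy disjoint value ranges: the inner perturbation $\mu^{(j)}$ pushes the tail of block $B_j$ rightward over the leading entries of $B_{j+1}$, i.e.\ over the \emph{largest} values of $B_{j+1}$, while the outer perturbation $\nu^{(j+1)}$ acts by raising precisely those largest values of $B_{j+1}$ past the smallest values of $B_{j+2}$. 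Already $\mu^{(j)}=(1)$ and $\nu^{(j+1)}=(1)$ both move the maximal element of $B_{j+1}$, so adjacent interfaces genuinely interfere (the same happens with $\nu^{(j)}$ and $\mu^{(j+1)}$), and neither the additivity of the inversion counts nor the preservation of the shape can be deduced from disjointness.

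This adjacent-interface interaction is exactly where the paper's proof does its work, and the resolving idea is absent from your proposal: the outer moves are \emph{dual} Knuth moves, i.e.\ they are determined by the relative order of specified \emph{values} rather than by positions, and the inner move at interface $j$ rearranges the entries of $B_{j+1}$ and $B_{j+2}$ without changing their relative order among themselves. Consequently the sequence of dual Knuth moves realizing $\nu^{(j+1)}$ is valid, and contributes the same number of inversions, after the action of $\mu^{(j)}$ if and only if it is valid before, which reduces everything to the two-column analysis. Without some such argument your construction does not yet establish that the composite perturbation adds exactly $\Delta$ inversions and preserves the shape; you correctly flag this as the main obstacle, but the mechanism you offer for overcoming it (disjoint value ranges) fails precisely in the only case where an obstacle exists.
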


We will prove Theorem~\ref{thm:more than two col} in Section~\ref{sec:lowerbd}. We believe that the following stronger conjecture holds.

\begin{conjecture}
\label{conj:more than two col}
Fix a partition $\lambda$ such that $\lambda'=\langle t_1^{m_1}, \dots, t_k^{m_k} \rangle$ with $t_1 > \cdots > t_k > 0$ and $m_i>0$ for all $i$.
Denote $M = \sum_{i=1}^k m_i$. Then for any $\Delta< t_k$,
\begin{align*}
w^{\Delta}_\lambda = p_{2(M-1)} (\Delta) \; \binom M {m_1,\dots,m_k}.
\end{align*}
\end{conjecture}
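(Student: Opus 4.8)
The plan is to prove the matching upper bound $w^{\Delta}_\lambda \le p_{2(M-1)}(\Delta)\,\binom{M}{m_1,\dots,m_k}$, since Theorem~\ref{thm:more than two col} already supplies the lower bound through an explicit family of permutations of shape $\lambda$. Spelling that family out makes the target concrete. For each of the $\binom{M}{m_1,\dots,m_k}$ orderings $c=(c_1,\dots,c_M)$ of the multiset of column lengths $\{t_1^{m_1},\dots,t_k^{m_k}\}$ one has Hohlweg's layered permutation $\pi_c$ from Theorem~\ref{thm:holhweg}(3), built from $M$ consecutive decreasing blocks $B_1,\dots,B_M$ of sizes $c_1,\dots,c_M$. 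Near each of the $M-1$ interfaces $B_i\,|\,B_{i+1}$ one then performs a bounded local ``interleaving'' perturbation recorded by a $2$-colored partition (equivalently, an ordered pair of ordinary partitions), and distributing a total of $\Delta$ cells of perturbation over the $M-1$ interfaces contributes the factor $\sum_{\delta_1+\dots+\delta_{M-1}=\Delta}\prod_{i} p_2(\delta_i)=p_{2(M-1)}(\Delta)$, which one reads off from $\prod_{m\ge1}(1-x^m)^{-2(M-1)}$. Thus the conjecture is equivalent to the two claims: (i) these perturbed permutations genuinely have shape $\lambda$ once $\Delta<t_k$, and are pairwise distinct; and (ii) \emph{every} permutation of shape $\lambda$ with exactly $\Delta<t_k$ extra inversions arises this way.

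Claim (i) is essentially the content of Theorem~\ref{thm:more than two col}: the ordering $c$ and the block sizes are visible in the coarse structure of the permutation, and since $\Delta<t_k$ the perturbations at distinct interfaces act on disjoint ranges of values, so their $2$-colored partitions can be read back off. The real work is (ii), a structural classification of $W_\lambda$ in the regime $\Delta<t_k$. I would argue by induction on $M$, using the jump-partition machinery of Section~\ref{sec:jumppart} in the spirit of the $M=2$ argument of Section~\ref{sec:two-col}, which serves as the base case (one interface). Given $\pi$ with $\sh(\pi)=\lambda$ and $\inv(\pi)=\sum_{i}\binom{\lambda'_i}{2}+\Delta$, the plan is to invoke Greene's theorem (so $\LIS(\pi)=M$, $\LDS(\pi)=t_1$, and all the refined invariants equal $\lambda$) together with the smallness of the inversion deficit to show that $\pi$ splits off an almost-decreasing initial block $B_1$ whose length $c_1$ is a part of $\lambda'$, followed by a permutation $\pi'$ whose shape is $\lambda$ with one column of length $c_1$ deleted; the inversions ``crossing'' between $B_1$ and $\pi'$ beyond the layered minimum form a $2$-colored partition of some $\delta_1<t_k$, and $\inv(\pi')$ has deficit $\Delta-\delta_1$ from its own minimum, again below the smallest column of its shape. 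Applying the inductive hypothesis to $\pi'$ and reassembling the generating functions then yields the count.

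The main obstacle is exactly the decoupling in (ii): proving that the influence of $B_1$ on the rest of $\pi$ is confined to the single interface $B_1\,|\,B_2$ and does not propagate past $B_2$. When $M=2$ there is nothing beyond $B_2$, which is why the argument of Section~\ref{sec:two-col} applies verbatim; for $M\ge3$ one must rule out configurations in which a small perturbation ripples through several consecutive blocks while still keeping the shape equal to $\lambda$ and the total deficit below $t_k$. Isolating the right invariant---presumably that each of the $M$ columns of $\lambda$ is witnessed by a genuine decreasing run whose support is an interval of positions up to a number of misplaced positions that is at most $\Delta$, so that the $M$ runs really are laid out left to right---is where a new idea beyond Han's method appears to be needed, and is the reason the statement is only a conjecture. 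A complementary attack worth trying is to compute $\sum_{\Delta\ge0} w^{\Delta}_\lambda\,x^{\Delta}$ for fixed $\lambda$ via a transfer-matrix or quasisymmetric-function encoding of the shape-$\lambda$ permutations and then extract its first $t_k$ coefficients; but making the threshold $\Delta<t_k$ drop out cleanly from such an approach does not look easier than the combinatorial route.
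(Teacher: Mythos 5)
The statement you are addressing is stated in the paper as a \emph{conjecture}, and the paper does not prove it: it only establishes the lower bound $w^{\Delta}_\lambda \geq p_{2(M-1)}(\Delta)\binom{M}{m_1,\dots,m_k}$ (Theorem~\ref{thm:more than two col}) and the full equality in the two-column case $M=2$ (Theorem~\ref{thm:two-col}). Your proposal correctly identifies this division of labor, and your accounting of the lower-bound family is accurate: a jump partition assigns an ordered pair of partitions to each of the $M-1$ interfaces, and the convolution identity $\sum_{\delta_1+\cdots+\delta_{M-1}=\Delta}\prod_i p_2(\delta_i)=p_{2(M-1)}(\Delta)$ follows from the product formula for $\prod_{m\ge1}(1-x^m)^{-2(M-1)}$. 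But your claim (ii) --- that every $\pi\in W_\lambda^\Delta$ with $\Delta<t_k$ arises from a unique pair (layered permutation, jump partition) --- is exactly the open part, and your sketch does not close it. You yourself flag this, so to be concrete about where the proposed induction breaks: the base case you want to reuse, namely the upper-bound argument of Section~\ref{sec:upperbd}, is built on the geometry of a single LDS rectangle together with the constraint $\LIS(\pi)=2$ (the ``LIS bound'' is invoked in every step of Lemma~\ref{lem:R-points-on-one-side} to forbid points in the regions $d$, $e$, $g\cup h\cup j$, etc.). For $M\geq 3$ one has $\LIS(\pi)=M$, the region analysis no longer forbids anything useful, and no analogue of Lemma~\ref{lem:R-points-on-one-side} is available. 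Moreover, the inductive step is circular as stated: to peel off an ``almost-decreasing initial block $B_1$'' and assert that the remaining positions carry a permutation of shape $\lambda$ minus one column with controlled deficit, you already need the block-decomposition structure you are trying to prove, and Greene's invariants of a restriction of $\pi$ to a set of positions are not simply obtained by deleting a column of $\sh(\pi)$.

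So the proposal is a reasonable research plan, and it correctly isolates the genuine obstruction (ruling out perturbations whose influence propagates across more than one interface while preserving the shape and keeping the deficit below $t_k$), but it is not a proof, and the missing decoupling lemma is precisely the new idea the paper says is needed. If you want to make partial progress, the most tractable first target is probably the injectivity/classification statement for $M=3$ with $k=1$ (rectangular $\lambda'$), where there is a unique minimal permutation and the combinatorics of the two interfaces can be examined directly.
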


We have verified Conjecture~\ref{conj:more than two col} for partitions of size up to 10.
The methods in this work do not seem to work when the shape of the permutation has more than two columns. We also note that we do not have any results of this nature when $\Delta$ is greater than or equal to $t_k$. It would be of great interest to generalize our results and conjectures to that case.

\section{Jump partitions and Knuth equivalence}
\label{sec:jumppart}
We are interested in counting the number of
permutations with a given number of inversions associated to the shape $\lambda$
via the RS correspondence. 
A natural and very useful tool is {\em Knuth equivalence} of permutations
\cite{Knuth1970}. A very readable introduction is by S. Fomin in \cite[Appendix 1]{Stanley1999}. A {\em Knuth transformation} of a permutation transforms adjacent entries $a,b,c$
satisfying $a<b<c$ in one of the following ways: $\cdots acb \cdots
\leftrightarrow \cdots cab \cdots$ or $\cdots bac \cdots
\leftrightarrow \cdots bca \cdots$. For our purposes, we will also need to keep track of the inversion number. We will therefore use the notation
\[
\text{$K_+$ moves to mean transpositions}
\begin{cases}
\cdots acb \cdots \rightarrow \cdots cab \cdots \\
\cdots bac \cdots \rightarrow \cdots bca \cdots
\end{cases}
\]
which increase the inversion number, and
\[
\text{$K_-$ moves to mean transpositions}
\begin{cases}
\cdots cab \cdots \rightarrow \cdots acb \cdots \\
\cdots bca \cdots \rightarrow \cdots bac \cdots
\end{cases}
\]
which decrease the inversion number.
Two permutations are {\em Knuth equivalent} if one can be obtained
from another using Knuth transformations.  
A {\em dual Knuth transformation} of a permutation is a Knuth
transformation of its inverse.
By the RS correspondence, two dual
Knuth equivalent permutations have the same $Q$-tableau.
We also need a more explicit notation for these.
We use
\[
\text{$KD_+$ moves to mean}
\begin{cases}
\cdots a \cdots (a-1) \cdots (a+1) \cdots \rightarrow
\cdots (a+1) \cdots (a-1) \cdots a\\
\cdots a \cdots (a+2) \cdots (a+1) \cdots \rightarrow
\cdots (a+1) \cdots (a+2) \cdots a\\
\end{cases}
\]
which increase the inversion number, and
\[
\text{$KD_-$ moves to mean}
\begin{cases}
\cdots (a+1) \cdots (a-1) \cdots a \cdots \rightarrow
\cdots a \cdots (a-1) \cdots (a+1)\\
\cdots (a+1) \cdots (a+2) \cdots a \cdots \rightarrow
\cdots a \cdots (a+2) \cdots (a+1)
\end{cases}
\]
which decrease the inversion number.  Note that $K_+$ and $K_-$ are
right actions of elementary transpositions and $KD_+$ and $KD_-$ are
left actions.  The importance of these moves for us comes from the following result, which is one of the important properties of Knuth moves.

\begin{proposition}[{\cite{Knuth1970}}]
Two permutations are associated to tableaux of the same shape via the
RS correspondence if and only if one can be obtained from the other by a series
of Knuth and dual Knuth tranformations.
\end{proposition}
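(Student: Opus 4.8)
The plan is to deduce the statement from Knuth's foundational theorem: a single Knuth transformation leaves the insertion tableau $P$ unchanged, and, conversely, two permutations with the same insertion tableau are Knuth equivalent. The role of dual Knuth transformations is then handled by passing to inverses and using the property that $\pi \leftrightarrow (P,Q)$ forces $\pi^{-1} \leftrightarrow (Q,P)$.

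For the ``if'' direction I would first observe that a $K_\pm$ move is by definition a Knuth transformation, so it suffices to check that each of the two types of Knuth transformation on a triple $a<b<c$ does not change $P$; this is a short finite case analysis comparing the bumping paths obtained from the two insertion orders, and it shows $\sh(\pi)=\sh(P(\pi))$ is preserved. A $KD_\pm$ move on $\pi$ is, by definition, a Knuth transformation applied to $\pi^{-1}$, and since $\pi^{-1}$ corresponds to $(Q(\pi),P(\pi))$, such a move leaves $P(\pi^{-1})=Q(\pi)$ unchanged and hence preserves $\sh(\pi)=\sh(Q(\pi))$. Thus every allowed move preserves the shape.

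For the ``only if'' direction, suppose $\sh(\pi)=\sh(\sigma)=\lambda$ and fix once and for all a reference SYT $T_\lambda$ of shape $\lambda$. Using the converse half of Knuth's theorem---the Knuth class of a permutation is exactly the set of permutations with a fixed insertion tableau, within which the recording tableau ranges bijectively over all SYT of shape $\lambda$---there is a sequence of Knuth transformations from $\pi$ to the unique permutation $\pi'$ with $P(\pi')=P(\pi)$ and $Q(\pi')=T_\lambda$. Now apply dual Knuth transformations to $\pi'$: these are Knuth transformations on $(\pi')^{-1}\leftrightarrow (T_\lambda, P(\pi))$, so by the same theorem in the inverse coordinate we may transform $\pi'$ into the permutation $\pi_\lambda$ characterized by $P(\pi_\lambda)=Q(\pi_\lambda)=T_\lambda$, which depends only on $\lambda$. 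Performing the identical construction starting from $\sigma$ lands on the same $\pi_\lambda$. Finally, $K_+,K_-$ come in a mutually inverse pair, as do $KD_+,KD_-$, so the whole chain from $\sigma$ to $\pi_\lambda$ can be run backwards; concatenating the chain from $\pi$ to $\pi_\lambda$ with the reversed chain connects $\pi$ and $\sigma$ by Knuth and dual Knuth transformations.

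The one genuinely nontrivial input is Knuth's theorem itself, in particular the identification of the Knuth class with the fiber of $\pi \mapsto P(\pi)$. I would either cite it (\cite{Knuth1970}, or the exposition in \cite[Appendix 1]{Stanley1999}) or prove it by combining the finite case check above for the invariance of $P$ with the standard fact that every permutation is Knuth equivalent to the reading word of its insertion tableau---shown by induction on the number of rows, ``bubbling'' each newly bumped row entry past the rows already constructed. This is the step I expect to be the main obstacle if a fully self-contained argument is wanted; the remainder is bookkeeping that exploits the reversibility of the moves.
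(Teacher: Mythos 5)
Your argument is correct: the paper gives no proof of this proposition, citing \cite{Knuth1970} instead, and your derivation---Knuth moves preserve $P$, dual Knuth moves preserve $Q$ (via the inverse and the symmetry $\pi\mapsto(P,Q)\Rightarrow\pi^{-1}\mapsto(Q,P)$), and for the converse routing both permutations through the common representative with $P=Q=T_\lambda$ using the fact that $Q$ ranges over all SYT of shape $\lambda$ within a Knuth class and that all moves are reversible---is exactly the standard way to deduce the statement from Knuth's theorem. No gaps.
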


As usual, let $\transp{i}$ for $i=1,\dots,n-1$ denote the standard generators
of the symmetric group $S_n$ by adjacent transpositions.  The left (resp. right) action of
$\transp{i}$ interchanges the values (resp. entries at positions) $i$
and $i+1$. 
To simplify notation, fix a minimal permutation $\pi$ corresponding to the composition $c=(c_1,\ldots,c_M)$, let 
$c_j^* = \sum_{i=1}^j c_i$ and define $c_0^*$ to be $0$.
{
Recall that a \emph{partition} $\lambda = (\lambda_1,\dots,\lambda_k)$ is a weakly decreasing sequence of positive integers. The \emph{size} of $\lambda$ is given by $|\lambda| = \sum_{i=1}^k \lambda_i$.
}

\begin{definition}
\label{def:inner}
Given a minimal permutation $\pi$ with $M$ blocks, an {\bf inner jump partition $\inner$ of size $\Delta$} for $\pi$ is a
sequence of partitions $\inner=(\mu^{(1)},\dots,\mu^{(M-1)})$,
satisfying the following conditions.
\begin{enumerate}
\item $|\inner| := \sum_{i=1}^{M-1} |\mu^{(i)}| = \Delta$, and
\item for $1 \le i \le  M-1$, $\ell(\mu^{(i)}) \le c_i$ and $\mu^{(i)}_1 < c_{i+1}$.
\end{enumerate}

An inner jump partition $\inner$ {\em acts} on a minimal permutation $\pi$ by right multiplication of transpositions $\transp{i}$. In particular, define the action of $\inner$  as
\begin{align}
\label{inner-action}
\pi \circ \inner & \defeq  \; \pi \cdot \prod_{i=1}^{{M-1}} \left( \transp{c_i^*} \cdots \transp{c_i^*-1+\mu^{(i)}_1} \right) \cdots \left(\transp{c_i^*-\ell(\mu^{(i)})+1} \cdots  \transp{c_i^*-\ell(\mu^{(i)})+\mu^{(i)}_{\ell(\mu^{(i)})}} \right),
\end{align}
{where the action of the product is in increasing order.}
We will call the action of each transposition above as an {\em inner jump}.
\end{definition}

\begin{example}
\label{eg:inneronmin}
Let $\pi = (3,2,1\, |\, 7,6,5,4\, |\, 10,9,8)$ be a minimal permutation for the partition $\lambda = (3,3,3,1)$ {with $c = (3,4,3)$}, and let $\inner = (\emptyset,(1,1))$. Then,
{$c^*_1 = 3$ and $c^*_2 = 7$ and therefore}
\[
\pi \circ \inner = \pi \cdot 
{
\underbrace{(\text{id})}_{\mu_1 = \emptyset} 
\cdot
\underbrace{\transp{7} \cdot \transp{6}}_{\mu_2 = (1,1)}
}
= (3,2,1\, |\, 7,6,10,5\, |\, 4,9,8).
\]
{For the sake of clarity, we have kept the locations of the dividing bars unchanged and will do so in the examples that follow as well. From this example, the reason for naming the action of the transposition an ``inner jump" becomes clearer (see also Example \ref{eg:M=2}). If one assigns the parts of the partition $(1,1)$ to the elements $5$ and $4$, the size of the part is the number of ``jumps" performed by the element to the right, with the jumps being performed by the elements in the second block over elements of the third block, across the dividing line - the ``inner" boundary - between the second and third blocks (in contrast to the ``outer" boundary which we describe next).}
\end{example}

\begin{definition}
\label{def:outer}
Given a minimal permutation $\pi$ with $M$ blocks, an {\bf outer jump partition} $\outer$ of size $\Delta$ for $\pi$ is a
sequence of partitions $\outer=(\nu^{(1)},\dots,\nu^{(M-1)})$ satisfying the following conditions.
\begin{enumerate}
\item $| \outer| := \sum_{i=1}^{M-1} | \nu^{(i)} | = \Delta$, and
\item For $1 \le i \le  M-1$, $\ell(\nu^{(i)}) \le c_i$ and $\nu^{(i)}_1 < c_{i+1}$.
\end{enumerate}

We say that an outer jump partition $\outer$ {\em acts} on a minimal permutation $\pi$ by left multiplication. In particular, define the action of $\outer$  as
\begin{align}
 \outer  \circ \pi =& 
 \prod_{i=1}^{{M-1}}  \left(\transp{c_{i}^*-1+\nu^{(i)}_1} \cdots \transp{c_{i}^*-\ell(\nu^{(i)})+1} \right) 
 \cdots \left( \transp{c_{i}^*-\ell(\nu^{(i)})+\nu^{(i)}_{\ell(\nu^{(i)})}} \cdots  \transp{c_{i}^*-\ell(\nu^{(i)})+1} \right) 
\cdot \pi,
\end{align}
{where the action of the product is in increasing order.}
We will call the action of each transposition above as an {\em outer jump}. 

\end{definition}

\begin{example}
\label{eg:outeronmin}
Let $\pi$ be the same minimal permutation as in Example~\ref{eg:inneronmin} {so that $c_1^* = 3$ and
$c_2^* = 7$}, and let $\outer = ((2),\emptyset)$. Then,
\[
\outer \circ \pi  =  
{
\underbrace{\transp{4} \cdot \transp{3}}_{\nu_1 = (2)}
\cdot \underbrace{\text{id}}_{\nu_2 = \emptyset}
}
 \cdot \pi = (5,2,1\, |\, 7,6,4,3\, |\, 10,9,8).
\]
{This example also clarifies the reason for naming the action of the transposition an ``outer jump". If one assigns the parts of the partition $(2)$ to the element  $3$, the size of the part is the number of ``jumps" performed by the element which moves from left from the first block and jumps over elements of the second block by wrapping around the ``outer" boundary  between the first and second blocks.}
\end{example}

\begin{definition}
A {\bf jump partition} for a minimial permutation $\pi$ with $M$ blocks of size $\Delta$ is a tuple $J=(\inner ,\outer)$ of inner and outer jump partitions such that
\[
|J| := |\inner|  + | \outer | = \Delta.
\]
\end{definition}

{The next result follows because there are a total of $2(M-1)$ partitions in a jump partition, each of which can be thought to be of a different color.}

{
\begin{proposition}
\label{prop:jump partition}
The number of jump partitions of size $\Delta$ for a minimal permutation $\pi$ with $M$ blocks is $p_{2(M-1)}(\Delta)$.
\end{proposition}
}

We {\em apply} a jump partition to a minimal permutation $\pi$ by acting on it first by $\inner$ and then by $\outer$, that is, define
\begin{equation}
J(\pi) \defeq  \outer \circ \pi \circ \inner.
\end{equation}

\begin{example}
\label{eg:jumponmin}
Let $\pi$ be the same minimal permutation as in Examples~\ref{eg:inneronmin} and \ref{eg:outeronmin},
with $\inner = (\emptyset,(1,1))$,  $\outer = ((2),\emptyset)$ and $J= (\inner ,\outer)$. 
Then,
\[
J(\pi)  =  \transp{4} \cdot \transp{3} \cdot \pi \cdot \transp{7} \cdot \transp{6}= (5,2,1\, |\, 7,6,10,4\, |\, 3,9,8).
\]
In this case, $\Delta = 4$, which is larger than the smallest column in $\lambda'$, which is 3.

\end{example}

The following is an immediate consequence of the definitions.

\begin{lemma} \label{lem:orderirrel}
The order of the action of inner and outer jump partitions is irrelevant.
\end{lemma}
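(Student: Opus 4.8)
The plan is short. The key observation is that the actions of $\inner$ and $\outer$ on $\pi$ are, respectively, right and left multiplication by \emph{fixed} elements of $S_n$ --- elements that depend only on the jump partitions $\inner,\outer$ and on the partial sums $c^*_j$ of the composition $c$ attached to the minimal permutation $\pi$, but \emph{not} on the permutation being multiplied --- after which the statement reduces to associativity in $S_n$.

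In detail, let $w_{\inner} \in S_n$ be the product of adjacent transpositions appearing after the ``$\pi \cdot$'' on the right-hand side of~\eqref{inner-action}, and let $w_{\outer} \in S_n$ be the product of adjacent transpositions appearing to the left of ``$\cdot\,\pi$'' in Definition~\ref{def:outer}. Both words are determined entirely by the combinatorial data $(\inner, c)$ and $(\outer, c)$ respectively, and neither references $\pi$ itself. Consequently, for \emph{any} $\sigma \in S_n$, acting by $\inner$ sends $\sigma \mapsto \sigma \cdot w_{\inner}$ and acting by $\outer$ sends $\sigma \mapsto w_{\outer} \cdot \sigma$. (The hypothesis that the argument be a minimal permutation, present in Definitions~\ref{def:inner} and~\ref{def:outer}, plays no role in the formula for the action itself; it is needed only when one wants to control the \emph{shape} of the output, which is not at issue here.)

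Applying this twice,
\[
\outer \circ (\pi \circ \inner) \;=\; w_{\outer} \cdot (\pi \cdot w_{\inner}) \;=\; (w_{\outer} \cdot \pi) \cdot w_{\inner} \;=\; (\outer \circ \pi) \circ \inner ,
\]
where the middle equality is associativity of multiplication in $S_n$. Hence $J(\pi) = \outer \circ \pi \circ \inner$ is unambiguous and the order of application is irrelevant. There is no genuine obstacle; the one point that deserves a moment's care is the parenthetical remark above, namely that the two multiplication recipes extend verbatim from minimal permutations to the intermediate (and in general non-minimal) permutations $\pi \circ \inner$ and $\outer \circ \pi$, and this is immediate from inspection of the defining formulas.
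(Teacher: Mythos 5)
Your proof is correct and matches the paper's (unstated) reasoning: the paper asserts the lemma as an immediate consequence of the definitions, noting precisely that inner jumps are right actions and outer jumps are left actions, so the commutation is just associativity in $S_n$, exactly as you argue. Your parenthetical remark about extending the action formulas to non-minimal intermediate permutations is a reasonable bit of added care, not a departure from the paper's approach.
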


For definiteness, we fix that we first apply the inner jump partition followed by the outer jump partition.

\begin{lemma} \label{lem:Jinverse}
Let $J = (\inner,\outer)$ be a jump partition for a minimal permutation $\pi$ and let $J'=(\outer,\inner)$. Then 
\[
(J(\pi))^{-1} = \inner \circ \pi \circ \outer = J'(\pi).
\]
\end{lemma}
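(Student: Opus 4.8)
The plan is to compute $(J(\pi))^{-1}$ directly from the group-theoretic description of $J(\pi)$ and then recognize the result as $J'(\pi)$. Write $J(\pi) = \outer \circ \pi \circ \inner = A \cdot \pi \cdot B$, where $A = A(\outer)$ denotes the product of elementary transpositions prescribed in Definition~\ref{def:outer} and $B = B(\inner)$ denotes the product prescribed in \eqref{inner-action}. Since inversion reverses products and every adjacent transposition $\transp{i}$ is an involution, $(J(\pi))^{-1} = B^{-1}\cdot \pi^{-1} \cdot A^{-1}$, where $A^{-1}$ (resp.\ $B^{-1}$) is obtained from $A$ (resp.\ $B$) simply by writing its transpositions in the reverse order. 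Moreover, $\pi$ is minimal, hence an involution, so $\pi^{-1} = \pi$ and $(J(\pi))^{-1} = B^{-1}\cdot \pi \cdot A^{-1}$.

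The heart of the argument is the purely combinatorial claim that, for any admissible sequence of partitions $\sigma$ and any fixed data $c_1^*,\dots,c_M^*$, the word obtained by reversing the inner product $B(\sigma)$ of \eqref{inner-action} is exactly the outer product $A(\sigma)$ of Definition~\ref{def:outer} (and, reversal being an involution on words, conversely). This is verified by comparing the two displayed products run by run: each ascending run $\transp{a}\transp{a+1}\cdots\transp{b}$ occurring in $B(\sigma)$ reverses to the descending run $\transp{b}\cdots\transp{a+1}\transp{a}$, which is precisely the matching run of $A(\sigma)$; the runs inside the block indexed by $i$ appear in $B(\sigma)$ in increasing order of the row index $j$ and in $A(\sigma)$ in decreasing order; and the blocks themselves appear in $B(\sigma)$ in increasing order of $i$ and in $A(\sigma)$ in decreasing order. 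Applying this with $\sigma = \inner$ gives $B^{-1} = B(\inner)^{-1}$ equals the outer word formed from $\inner$, and with $\sigma = \outer$ gives $A^{-1} = A(\outer)^{-1}$ equals the inner word formed from $\outer$.

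Combining the two steps, $(J(\pi))^{-1} = \left(\text{outer word of }\inner\right)\cdot \pi \cdot \left(\text{inner word of }\outer\right) = \inner \circ \pi \circ \outer$, where now $\inner$ acts on the left in the outer fashion and $\outer$ acts on the right in the inner fashion; by definition this is $J'(\pi)$ for $J' = (\outer,\inner)$. One also notes that the constraints $\ell(\cdot)\le c_i$ and $(\cdot)_1 < c_{i+1}$ governing inner and outer jump partitions in Definitions~\ref{def:inner} and~\ref{def:outer} have the same form, so $J'$ is again a legitimate jump partition for $\pi$; by Lemma~\ref{lem:orderirrel} the order in which we apply its inner and outer parts is immaterial.

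I expect the only genuine work to be the index bookkeeping in the second paragraph: matching the offsets $c_i^*$, the row lengths $\mu^{(i)}_j$, and the run endpoints $c_i^*-j+1,\dots,c_i^*-j+\mu^{(i)}_j$ between the two formulas and confirming that the reversal of one word is the other verbatim. Everything else — that inversion reverses a word of involutions, and that $\pi$ is an involution — is immediate from the definitions and the remark that minimal permutations are involutions.
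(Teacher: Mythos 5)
Your proof is correct and follows essentially the same route as the paper's: both arguments reduce to the facts that inverting a product of adjacent transpositions reverses the word, that the reversed inner word is the outer word for the same sequence of partitions (and vice versa), and that minimal permutations are involutions. The only cosmetic difference is direction --- you compute $(J(\pi))^{-1}$ and identify it as $J'(\pi)$, while the paper computes $(J'(\pi))^{-1}$ and identifies it as $J(\pi)$ --- and you spell out the run-by-run index matching that the paper leaves as ``can be verified by using Definitions \ref{def:inner} and \ref{def:outer}.''
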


\begin{proof}
We have
\[
(J'(\pi))^{-1} = (\inner \circ \pi \circ \outer )^{-1} = \outer \circ \pi^{-1} \circ \inner = \outer \circ \pi \circ \inner = J(\pi).
\]
Above, the second equality can be verified by using Definitions \ref{def:inner} and \ref{def:outer}, while the third equality follows since minimal permutations are involutions.
\end{proof}

\begin{example}
Suppose $\pi \in S_{53}$ is a minimal permutation corresponding to the composition $c = (14, 15, 12, 12)$ so that it is
written in one line notation as
\[
(14, \dots, 3, 2, 1\, |\,  29, 28, 27, \dots, 17, 16, 15
\, |\,  41, 40, 39, \dots, 33, 32, 31, 30\, |\,  53, \dots, 43, 42).
\]
The jump partition $J= (\inner, \outer)$ given by $\inner=((1,1),(2,1),\phi)$ and $\outer=(\phi,(3,1),(2))$ applied to $\pi$ returns
$(\transp{42}\transp{41})(\transp{28})(\transp{31}\transp{30}\transp{29})
\cdot \pi \cdot
(\transp{14})(\transp{13})(\transp{29}\transp{30})(\transp{28})$,  
and we obtain
\[
\resizebox{\hsize}{!}{
$(14, \dots, 3, 32, 2\, |\,  1, 29, 27, \dots, 17, 43, 16\, |\, 
40, 15, 39, \dots, 33, 31, 30, 28\, |\,  53, \dots, 44, 42, 41).
$
}
\]

\end{example}

We now present a complete example for $M=2$ in order to clarify the notation set up above, and to give some insight for Theorem~\ref{thm:two-col} and
Conjecture \ref{conj:more than two col}. 

\begin{example}
\label{eg:M=2}
We begin with a minimal permutation on $12$ elements with shape $\lambda = \langle 2^6 \rangle$ and enumerate the permutations that are obtained from all possible jump partitions, dividing into cases according to the number of inversions introduced.
The unique minimal permutation is given by $(6,5,4,3,2,1\, |\, 12,11,10,9,8,7)$.
{
One can compute by hand all permutations for $\Delta =1$ and $\Delta = 2$ for this $\lambda$. It turns out that there are 2 and 5 such permutations respectively.
We show below that in fact jump partitions can be used to enumerate these permutations.
}

From the minimal permutation, the permutations corresponding to $\Delta=1$ are obtained by applying jump partitions of size 1. {In each example below, we will write inner jumps $\mu$ in parentheses and
outer jumps $\nu$ in square brackets.}
The first row in each pair of permutations below is the 
unique permutation corresponding to $\Delta=0$, and number
of jumps an element is involved in is noted next to an arrow above the element. The 
second row shows the resulting permutation. The affected elements
(which differ at their positions from the original permutation) are
parenthesized.

\[
\begin{array}{c}
\mathrm{\mathbf{1 \ inner, \ 0 \ outer \ jumps} }\\

\begin{array}{cccccc|cccccc}
&&&&&(1) \\
6 & 5 & 4 & 3 & 2 & 1^{\rightarrow 1} &  12 & 11 & 10 & 9 & 8 & 7 \\
6 & 5 & 4 & 3 & 2 & (12  &  1) & 11 & 10 & 9 & 8 & 7 \\
\end{array}
\end{array}
\]

\[
\begin{array}{c}
\mathrm{\mathbf{ 0 \ inner, \ 1 \ outer \ jump}} \\

\begin{array}{cccccc|cccccc}
[1] \\
6^{\rightarrow 1} & 5 & 4 & 3 & 2 & 1   &  12 & 11 & 10 &
  9 & 8 & 7 \\ 
7) & 5 & 4 & 3 & 2 & 1 &  12 & 11 & 10 & 9 & 8 & (6 \\
\end{array}
\end{array}
\]

Proceeding in the same way for $\Delta=2$, we now list out all the
ways to divide a total of two jumps among the inner and outer
jumps. Each jump will result in one more inversion. Jumps are made so an element
does not jump over any element in its own block. 
Thus an element
can make at most as many jumps as the element ahead of it. This
restriction on the number of jumps that an element can make compared
to its successor means that the vector of jumps is a partition.

\[
\begin{array}{c}
\mathrm{\mathbf{1 \ inner, \ 1 \ outer \ jump} }\\

\begin{array}{cccccc|cccccc}
[1] &&&&&(1) \\
 6^{\rightarrow 1}  & 5 & 4 & 3 & 2 &  1^{\rightarrow 1}   &  12 & 11 &
  10 & 9 & 8 & 7 \\ 
7) & 5 & 4 & 3 & 2 & (12  &  1) & 11 & 10 & 9 & 8 & (6 \\
\end{array}
\end{array}
\]

\[
\begin{array}{c}
\mathrm{\mathbf{ 2 \ inner, \ 0 \ outer \ jumps}} \\

\begin{array}{lcccccc|cccccc}
&&&&&& (2) \\
\text{{(i)}} & 6 & 5 & 4 & 3 & 2 &  1^{\rightarrow 2}   &  12 & 11 &
  10 & 9 & 8 & 7 \\ 
& 6 & 5 & 4 & 3 & 2 & (12  &  11 & 1) & 10 & 9 & 8 & 7 \\
\end{array}\\
\vspace{0.1cm} \\
\begin{array}{lcccccc|cccccc}
& &&&&(1)& (1) \\
\text{{(ii)}} & 6 & 5 & 4 & 3 &  2^{\rightarrow 1}   &  1^{\rightarrow 1}   &  12 & 11 & 10 & 9 & 8 & 7 \\ 
& 6 & 5 & 4 & 3 & (12 & 2  &  1) &  11 & 10 & 9 & 8 & 7 \\
\end{array}\\

\end{array}
\]

\[
\begin{array}{c}
\mathrm{ \mathbf{ 0 \ inner, \ 2 \ outer \ jumps}} \\

\begin{array}{lcccccc|cccccc}
& [2] \\
\text{{(i)}} &  6^{\rightarrow 2} & 5 & 4 & 3 & 2 & 1 &  12 & 11 &
  10 & 9 & 8 & 7 \\ 
& 8) & 5& 4 & 3 & 2 & 1  &  12 & 11 & 10 & 9 & (7 & 6 \\
\end{array}\\
\vspace{0.1cm} \\

\begin{array}{lcccccc|cccccc}
& [1] & [1] \\
\text{{(ii)}} & 6^{\rightarrow 1}  &  5^{\rightarrow 1}   & 4 & 3 & 2 & 1  &  12 & 11 &
  10 & 9 & 8 & 7 \\ 
& 7 & 6) & 4 & 3 & 2 & 1  & 12 & 11 & 10 & 9 & 8 & (5\\
\end{array}
\end{array}
\]

Thus, the total number of
permutations with shape $\langle 2^6 \rangle$ with two inversions is
\[
w_{(6,6)'}^2 = p(0)p(2) +p(1)p(1)+p(2)p(0) = p_2(2) = 5,
\]
{
where we have used \eqref{pc-recur} for the second equality.
Thus, we have established by this example that $w_{(6,6)'}^2$ equals $p_2(2)$, which agrees with  Theorem~\ref{thm:two-col}.
}
\end{example}

\section{Permutations with two columns}
\label{sec:two-col}

In this section we will prove Theorem \ref{thm:two-col}. Recall that $W_\lambda$ is the set of permutations whose shape $\lambda$ is given by
$\lambda' = (s,r)$, with $s \geq r$. 
From Theorem~\ref{thm:holhweg}(1), the minimal number of inversions associated to permutations of shape $\lambda$ is $\binom r2 + \binom s2$. 

The proof of Theorem~\ref{thm:two-col} will be bijective. Since we are dealing with the case of two columns, i.e. $k=2$, we will denote a jump partition by $J = (\mu,\nu)$, where $\mu,\nu$ are partitions, for convenience. We say that a jump partition $J$ is {\em valid} for a minimal permutation $\pi$ if $\sh(J(\pi)) = \sh(\pi)$. We begin by collecting a few useful results.
The following result immediately follows from Lemma \ref{lem:Jinverse}.

\begin{corollary} \label{cor:dualvalid}
If $J = (\mu,\nu)$ is valid for $\pi$, then so is $J'=(\nu,\mu)$.
\end{corollary}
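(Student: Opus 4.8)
The statement to prove is Corollary~\ref{cor:dualvalid}: if $J = (\mu,\nu)$ is valid for $\pi$, then so is $J'=(\nu,\mu)$.

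The plan is to derive this directly from Lemma~\ref{lem:Jinverse}. That lemma tells us that $(J(\pi))^{-1} = J'(\pi)$ for the jump partition $J' = (\outer,\inner)$ obtained by swapping the inner and outer components of $J$. (In the two-column notation of this section, $J = (\mu,\nu)$ has inner part $\mu$ and outer part $\nu$, so $J' = (\nu,\mu)$.) First I would recall the basic fact about the RS correspondence stated in the preliminaries: if $\pi$ is associated to the pair of tableaux $(P,Q)$, then $\pi^{-1}$ is associated to $(Q,P)$. In particular $\sh(\pi^{-1}) = \sh(\pi)$ for every permutation $\pi$, since both equal $\sh(P) = \sh(Q)$.

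Now the argument is a one-line chain of equalities. Suppose $J = (\mu,\nu)$ is valid for $\pi$, i.e. $\sh(J(\pi)) = \sh(\pi)$. Then
\[
\sh(J'(\pi)) = \sh\big((J(\pi))^{-1}\big) = \sh(J(\pi)) = \sh(\pi),
\]
where the first equality is Lemma~\ref{lem:Jinverse}, the second is the fact that taking inverses preserves the shape, and the third is the assumed validity of $J$. Hence $J'$ is valid for $\pi$, which is the claim.

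There is essentially no obstacle here: the corollary is a formal consequence of Lemma~\ref{lem:Jinverse} together with the inverse-symmetry of the RS shape, and the only thing to be careful about is matching the two-column notational convention ($J=(\mu,\nu)$ with $\mu$ inner and $\nu$ outer) against the general-$M$ notation of Lemma~\ref{lem:Jinverse}. So the "proof" is really just the display above, perhaps preceded by the reminder that $\sh$ is invariant under inversion.
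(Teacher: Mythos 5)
Your argument is correct and is exactly the paper's proof, just written out in more detail: the paper also derives the corollary from Lemma~\ref{lem:Jinverse} together with the fact that the RS shape of a permutation equals that of its inverse. Nothing further is needed.
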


\begin{proof}
In the RS correspondence, the shape of a permutation and its inverse are the same.
\end{proof}

\begin{lemma} \label{lem:innerouterdisjoint}
If $\Delta < r$, and $J = (\mu,\nu)$ is a jump partition of size $\Delta$, the elements of the minimal permutation $\pi$ not fixed under the action of $\mu$ are disjoint from those not fixed under the action of $\nu$.
\end{lemma}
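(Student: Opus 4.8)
We want to show that when $\Delta < r$, the entries of $\pi$ that move under $\mu$ and the entries that move under $\nu$ are disjoint sets. Here $\pi = (s, s-1, \dots, 1 \,|\, n, n-1, \dots, s+1)$ is the minimal permutation for $\lambda' = (s,r)$ (with $s \ge r$), the inner partition $\mu$ acts on the right in the first block (positions $1,\dots,s$), and the outer partition $\nu$ acts on the left in the values $1, \dots, s$ (since $c_1 = s$). So both actions are concentrated on the first block — $\mu$ by positions, $\nu$ by values — and we need the affected positions/values to stay clear of each other.

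**Plan.** The plan is to track explicitly which entries of $\pi$ are disturbed by each action. For the inner action, $\mu = (\mu_1 \ge \mu_2 \ge \cdots)$ with $\ell(\mu) \le c_1 = s$ and $\mu_1 < c_2 = r$; the product in \eqref{inner-action} applies transpositions $\transp{s}, \transp{s-1}, \dots$ reading off the parts of $\mu$. One checks that the $j$-th row of $\mu$ only involves positions $s - j + 1, s-j+2, \dots, s - j + \mu_j$ at the moment it acts, and since $\mu_j \le \mu_1 < r$, these indices lie in a window of width $< r$ just to the left of (and crossing) the block boundary $s$. The key quantitative observation is that $\sum_j \mu_j = |\mu| \le \Delta < r$, so the total "reach" of the inner action into the first block — i.e. the largest index $i$ with $s - i + 1$ ever appearing — is at most $|\mu| < r \le s$; the entries in positions $1, \dots, s - |\mu|$ of $\pi$ are untouched by $\mu$. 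Dually (using Corollary~\ref{cor:dualvalid} and Lemma~\ref{lem:Jinverse}, which interchange the roles of $\mu$ acting on positions and $\nu$ acting on values), the outer action $\nu$ only moves entries whose \emph{values} lie in $\{s - |\nu| + 1, \dots, s\} \cup \{\text{one value just above } s\}$, again a window of width $\le |\nu| + O(1)$; more carefully, $\nu$ only disturbs values in the range $s - |\nu| + 1, \dots, s$ together with the values $s+1, \dots$ that get pulled down, so the disturbed values are confined to a band around the boundary of total width controlled by $|\nu|$.

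**Combining the two bounds.** In $\pi$ restricted to the first block, position $i$ holds value $s + 1 - i$. So "positions $s - |\mu| + 1, \dots, s$ are the only ones $\mu$ can touch" means "$\mu$ only touches entries with value $\le |\mu|$ (in the first block), plus possibly some values $> s$ that get dragged in"; and "$\nu$ only touches entries with value $\ge s - |\nu| + 1$" in the first block. Since $|\mu| + |\nu| = \Delta < r \le s$, we have $|\mu| < s - |\nu| + 1$, so the value-set touched by $\mu$ and the value-set touched by $\nu$ are disjoint within the first block, and the "overflow" values ($> s$, pulled in by $\mu$) versus the values pulled down by $\nu$ also don't collide because each such overflow still sits in a position $\le s$ whose original value was $\le |\mu|$. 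Making the bookkeeping of these two windows precise — exactly which indices and values are in play after the full product of transpositions, not just after one — is the step I expect to be the main obstacle; it is essentially a careful induction on the rows of $\mu$ (resp. $\nu$), showing that applying row $j$ only rearranges a contiguous block of $\mu_j + 1 \le r$ consecutive positions near index $s$, leaving a prefix of length $s - (\mu_1 + \cdots + \mu_j)$ of the first block frozen. Once both one-sided statements are established, disjointness is immediate from $|\mu| + |\nu| < r$.

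**Remark on strategy.** It may be cleaner to prove a single lemma first: for a partition $\rho$ with $\ell(\rho) \le s$, $\rho_1 < r$ and $|\rho| < r$, the inner action of $\rho$ on $\pi$ fixes every entry in positions $1, \dots, s - |\rho|$ and every entry in positions outside the first block except possibly the first few of the second block, moving only the entries originally in positions $s - |\rho| + 1, \dots, s$ (values $1, \dots, |\rho|$) and some small prefix of block two. Applying this to $\rho = \mu$ gives the inner statement; applying it to $\rho = \nu$ and then conjugating by inversion (Corollary~\ref{cor:dualvalid}) gives the outer statement with positions replaced by values. Then disjointness of the two affected sets follows from $|\mu| + |\nu| = \Delta < r$.
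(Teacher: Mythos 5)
Your overall strategy is the same as the paper's: read off from Definitions~\ref{def:inner} and~\ref{def:outer} exactly which values of $\pi$ are disturbed by $\mu$ and by $\nu$, and then use $|\mu|+|\nu|=\Delta<r\le s$ to separate the two windows. The portion of your argument inside the first block is fine: $\mu$ disturbs only the values $1,\dots,\ell(\mu)$ and $\nu$ only the values $s-\ell(\nu)+1,\dots,s$ there, and $\ell(\mu)+\ell(\nu)\le\Delta<s$ keeps these apart (your over-estimates by $|\mu|$ and $|\nu|$ in place of $\ell(\mu)$ and $\ell(\nu)$ are harmless).

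However, the half of the argument concerning values greater than $s$ has a genuine gap. The values above $s$ disturbed by $\nu$ are not ``one value just above $s$'' but the $\nu_1$ values $s+1,\dots,s+\nu_1$ at the bottom of the second block, while the values dragged in by $\mu$ are the $\mu_1$ values $s+r-\mu_1+1,\dots,s+r$ at the top of the second block. Their disjointness requires the inequality $\mu_1+\nu_1<r+1$ --- which does follow from $\mu_1\le|\mu|$, $\nu_1\le|\nu|$ and $|\mu|+|\nu|=\Delta<r$ --- but you never state it. The reason you do give, namely that ``each such overflow still sits in a position $\le s$ whose original value was $\le|\mu|$,'' is both inaccurate as a description of where those values end up and, more importantly, irrelevant: disjointness of the two sets of moved \emph{values} has nothing to do with the positions they occupy afterwards. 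This is precisely the second of the two inequalities the paper checks ($s+r-\mu_1+1>s+\nu_1$), so the repair is immediate, but as written that case is not actually proved.
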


\begin{proof}
We analyze the case that the minimal permutation is $\pi_1 = (s, s-1, \dots, 1 \,  | \,  s+r , \dots, s+1)$;
the other case is similar.
From Definition~\ref{def:inner}, when the inner jump partition $\mu$ is applied to $\pi_1$, the set of elements which are not fixed is $M = \{1,\ldots,\ell(\mu),s+r-\mu_1+1,\ldots,s+r\}$. Similarly from Definition~\ref{def:outer}, when the outer jump partition $\nu$ is applied to $\pi_1$, the set of elements that are not fixed is $N = \{s - \ell(\nu) +1, s - \ell(\nu),\cdots,s+\nu_1\}$.
Note that we have 
\begin{align*}
\ell(\mu) +\ell(\nu) \le \sum_{i=1}^{\ell(\mu)} \mu_i + \sum_{i=1}^{\ell(\nu)} \nu_i = \Delta < r \le s.
\end{align*}
From this, we conclude that $\ell(\mu) +\ell(\nu) < s+1$
and $\mu_1 + \nu_1 < r+1$.
These inequalities imply that $s-\ell(\nu) +1 > \ell(\mu)$
and $s+r-\mu_1+1 > s + \nu_1$,
and hence that $ M \cap N = \emptyset$.
\end{proof}

\begin{proposition}
\label{prop:jumpbicolor}
The number of jump partitions of size $\Delta$ is equal to $p_2(\Delta)$.
\end{proposition}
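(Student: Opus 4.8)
The plan is to use the hypothesis $\Delta < r$ to show that the size constraints in Definitions~\ref{def:inner} and~\ref{def:outer} are vacuous, so that a jump partition of size $\Delta$ is exactly an ordered pair of ordinary partitions with total size $\Delta$, and then to identify that count with $p_2(\Delta)$.

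First I would unwind the definition. In the two-column case a jump partition for a fixed minimal permutation $\pi$ is a pair $J = (\mu,\nu)$ in which the inner jump partition $\mu$ and the outer jump partition $\nu$ are each a single partition, subject to $|J| = |\mu| + |\nu| = \Delta$. Write $(c_1,c_2)$ for the composition attached to $\pi$, a rearrangement of $(s,r)$; since $s \ge r$ we have $\min(c_1,c_2) = r$. The conditions on a jump partition are then $\ell(\mu) \le c_1$, $\mu_1 < c_2$ and, in the same way, $\ell(\nu) \le c_1$, $\nu_1 < c_2$. The key (and only) point is that for any partition one has $\ell(\mu) \le |\mu|$ and $\mu_1 \le |\mu|$, and here $|\mu|, |\nu| \le \Delta < r \le \min(c_1,c_2)$; hence $\ell(\mu), \mu_1, \ell(\nu), \nu_1$ are all strictly smaller than both $c_1$ and $c_2$, so every condition holds automatically. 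Thus the set of jump partitions of size $\Delta$ is precisely the set of ordered pairs $(\mu,\nu)$ of partitions with $|\mu| + |\nu| = \Delta$, and in particular it is independent of the chosen minimal permutation.

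Finally I would count these pairs. Their number is $\sum_{a+b=\Delta} p(a)p(b)$, which is the coefficient of $x^{\Delta}$ in $\big(\prod_{m\ge 1}(1-x^m)^{-1}\big)^2 = \prod_{m\ge 1}(1-x^m)^{-2}$, that is, $p_2(\Delta)$; alternatively one obtains a direct bijection with $P_2(\Delta)$ by assigning to a partition of $\Delta$ into parts of two colors the pair consisting of the partition formed by its first-color parts and the partition formed by its second-color parts. I do not expect a genuine obstacle here: the substance of the argument is just the bookkeeping that makes the constraints disappear, and that is exactly where the assumption $\Delta < r$ is used.
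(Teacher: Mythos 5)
Your proof is correct and follows the same route as the paper, which simply states that the count follows immediately from the definition; you have usefully made explicit the one point the paper leaves implicit, namely that $\Delta < r$ renders the constraints $\ell(\mu)\le c_1$, $\mu_1 < c_2$ (and likewise for $\nu$) vacuous, so the jump partitions are exactly the ordered pairs of partitions of total size $\Delta$, counted by $p_2(\Delta)$.
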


\begin{proof}
This follows immediately from the definition of a jump partition.
\end{proof}

We divide the ideas of the proof into the following two subsections. 
In Section~\ref{sec:lowerbd}, we show that starting from a minimal permutation $\pi \in W_\lambda$ and applying a valid jump partition $J$ of size $\Delta <r$ to it, results in a unique permutation in $W_\lambda^\Delta$. In Section~\ref{sec:upperbd}, we show that when $\Delta <r$, every permutation in $W_\lambda^\Delta$ can be obtained uniquely from a minimal permutation and a valid jump partition of size $\Delta$. 
Finally, we stitch together the ideas to complete the proof in Section~\ref{sec:complete-proof}.

\subsection{Lower bound}
\label{sec:lowerbd}

\begin{theorem} 
\label{thm:jumpseqgivesperm}
Let $\Delta <r$.  Every valid jump partition $J$ of size $\Delta$ for a minimal permutation of shape $\lambda$, where $\lambda' = (s,r)$, gives a permutation in $W_\lambda^\Delta$.
\end{theorem}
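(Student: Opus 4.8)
The plan is to show that applying a valid jump partition $J=(\mu,\nu)$ of size $\Delta < r$ to a minimal permutation $\pi$ of shape $\lambda$ with $\lambda'=(s,r)$ produces a permutation $J(\pi)$ with (a) exactly $\Delta$ more inversions than $\pi$, and (b) the same shape $\lambda$. Validity already gives (b) by definition, so the real content is (a): that every one of the $\Delta$ transpositions appearing in the action $\outer\circ\pi\circ\inner$ genuinely increases the inversion number by exactly one, with no cancellation or double-counting. Since $\inv(\pi) = \binom{s}{2}+\binom{r}{2}$ is the minimum by Theorem~\ref{thm:holhweg}(1), once (a) holds we get $\inv(J(\pi)) = \binom{s}{2}+\binom{r}{2}+\Delta$, which together with (b) places $J(\pi)$ in $W_\lambda^\Delta$.

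First I would reduce to analyzing the inner action and the outer action separately. By Lemma~\ref{lem:orderirrel} the order does not matter, and by Lemma~\ref{lem:innerouterdisjoint} (using $\Delta<r$) the elements moved by $\mu$ and those moved by $\nu$ are disjoint, so the inner jumps and the outer jumps act on disjoint sets of positions/values and their effects on the inversion count simply add. Thus it suffices to prove the claim for a pure inner jump partition $(\mu,\emptyset)$ and a pure outer jump partition $(\emptyset,\nu)$. For the inner case, I would work with the explicit form in \eqref{inner-action}: the rightmost block of the product moves the entry at position $c_1^* = s$ (the value $1$, in the permutation $\pi_1=(s,\dots,1\,|\,s+r,\dots,s+1)$) rightward past $\mu_1$ entries of the second block, each elementary transposition $\transp{j}$ creating exactly one new inversion because it swaps a smaller element (coming from the first block) past a larger one (from the second block); then the next block moves the entry originally at position $s-1$ rightward past $\mu_2 \le \mu_1$ entries, and so on. Because $\ell(\mu)\le c_1 = s$ and $\mu_1 < c_2 = r$, none of these entries ever has to hop over an element of its own block, so each transposition is a genuine ascent-to-descent swap; I would make this precise by tracking, after the first $i-1$ blocks have acted, exactly which values sit in positions $s-i+1,\dots,s+r$ and checking the relevant pair is in increasing order just before the swap. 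The outer case is dual: by Lemma~\ref{lem:Jinverse}, $(\emptyset,\nu)$ applied to $\pi$ is the inverse of $(\nu,\emptyset)$ applied to $\pi$, and inversion number is preserved under taking inverses, so the outer count follows from the inner count. (Alternatively one runs the same position-tracking argument on the left action directly.)

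I would then handle the second minimal permutation $\pi_2 = (r,\dots,1\,|\,s+r,\dots,r+1)$ (when $s>r$, i.e. the "$c=(r,s)$" ordering) by the same bookkeeping — the excerpt's lemmas are already phrased to cover "the other case is similar," so only a brief remark is needed. The main obstacle I expect is the combinatorial bookkeeping in the inner case: one must verify that after some of the blocks in \eqref{inner-action} have already been applied, the entry about to be transposed is still strictly smaller than its right neighbor, i.e. that earlier jumps have not reshuffled the relevant window in a way that destroys the ascent. This is where the partition conditions $\ell(\mu^{(i)})\le c_i$ and $\mu^{(i)}_1<c_{i+1}$ (equivalently, "an element jumps over at most as many elements as the element ahead of it, and never over its own block") are used essentially, exactly as in the informal discussion around Example~\ref{eg:M=2}. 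Once that invariant is established, each of the $\Delta$ transpositions contributes $+1$, the inner and outer contributions add by disjointness, and the theorem follows.
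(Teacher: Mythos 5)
Your proposal has a genuine gap: it disposes of shape preservation with the phrase ``validity already gives (b) by definition,'' but establishing validity is the entire content of the paper's proof, and your argument does not supply it. Although the statement is worded with the word ``valid,'' the way Theorem~\ref{thm:jumpseqgivesperm} is used together with Proposition~\ref{prop:jumpbicolor} to deduce the lower bound $w_\lambda^\Delta \ge p_2(\Delta)\, w_\lambda^0$ requires knowing that \emph{every} jump partition of size $\Delta<r$ is valid, i.e.\ that $\sh(J(\pi))=\sh(\pi)$; that is what the paper actually proves. Your inversion bookkeeping --- each elementary transposition swaps a small first-block value with a larger second-block value immediately to its right, hence adds exactly one inversion, and the inner and outer contributions add by Lemma~\ref{lem:innerouterdisjoint} --- is fine as far as it goes, but it says nothing about the shape: an inversion-increasing adjacent transposition can change the RS shape (e.g.\ $(1,2,3)\mapsto(2,1,3)$ takes shape $(3)$ to $(2,1)$), so ``each swap is an ascent-to-descent swap'' does not give Knuth equivalence.

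The paper flags exactly this difficulty with the example $\pi=(5,4,3,2,1\,|\,11,10,9,8,7,6)$, $\mu=(2,2)$: the fourth elementary transposition $\transp{5}$ interchanges $2$ and $10$ in $(5,4,3,11,2\,|\,10,1,9,8,7,6)$ and is \emph{not} a $K_+$ move, so one cannot argue transposition by transposition that the shape is preserved. The paper's fix is a detour: replace $\mu=(\mu_1,\dots,\mu_k)$ by the strict partition $\widetilde\mu=(\mu_1+k-1,\dots,\mu_{k-1}+1,\mu_k)$, check that $\pi\circ\widetilde\mu$ is reached from $\pi$ by $|\mu|+\binom k2$ genuine $K_+$ moves (strictness guarantees the required adjacency pattern at every step), and then descend from $\pi\circ\widetilde\mu$ to $\pi\circ\mu$ by $\binom k2$ $K_-$ moves; the outer case is handled dually via $KD_\pm$ and Lemma~\ref{lem:Jinverse}, and a separate disjointness argument shows $\widetilde\mu$ and $\widetilde\nu$ do not interfere. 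Some such device is indispensable, and your proposal would need to be rebuilt around it (or around an independent argument that $J(\pi)$ still has longest decreasing subsequence $s$ and longest increasing subsequence $2$) before it proves the theorem in the form the rest of the paper relies on.
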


We will prove Theorem~\ref{thm:jumpseqgivesperm} by showing that the action of every valid jump partition on a minimal permutation can also be obtained by Knuth moves while keeping track of the number of inversions. Before proving the result, we give an example to illustrate why this is nontrivial.

\begin{example}
Consider the minimal permutation $\pi = (5,4,3,2,1\, |\, 11,10,9,8,7,6)$ corresponding to shape $\lambda$ with $\lambda' = (6,5)$, and the inner jump partition which is the single partition $\mu = (2,2)$. This corresponds to $\pi \transp{5} \transp{6} \transp{4} \transp{5}$. The first three actions of transpositions on $\pi$ are $K_+$ moves. For instance, the permutation $\pi \transp{5}$ corresponds to interchanging 1 and 11, which is a  $K_+$ move because 2 is to the immediate left of 1. Similarly, after the action of the other two transpositions, we get the permutation $\pi' = (5,4,3,11,2\, |\, 10,1,9,8,7,6)$. But 
$\pi' \transp{5}$ corresponds to interchanging 2 and 10, which is no longer a  $K_+$ move.

{To solve this problem in this case, we first consider $\pi' \transp{6}$, which interchanges $1$ and $9$ and is a $K_+$ move. Now, the interchange of 2 and 10 is indeed a $K_+$ move. Finally, we interchange $9$ and $1$ back as a $K_-$ move. The strategy of proof of 
Theorem~\ref{thm:jumpseqgivesperm} generalizes this idea.}
\end{example}

\begin{proof}[Proof of Theorem~\ref{thm:jumpseqgivesperm}]

There are two possibilities for the minimal permutation $\pi$ when $r<s$, either
$(r,\dots,1\, |\, n,n-1,\dots,r+1)$ or $(s,\dots,1\, |\, n,n-1,\dots,s+1)$,
which are identical when $r=s$. We will only prove the result for the first case as the other can be argued along similar lines.

Consider the case when the jump partition is purely inner, i.e. $J = (\mu, \emptyset)$. Write the partition $\mu$ of $\Delta$ as $\mu = (\mu_1, \dots, \mu_k)$ where $r > \mu_1 \geq \cdots \geq \mu_k > 0$ and $k<r$.
The algorithm is as follows. The idea is to enlarge the inner jump partition to a strict partition (i.e. a partition with distinct parts) $\widetilde{\mu}$ containing $\mu$ whose action on $\pi$ can be decomposed into a sequence of  $K_+$ moves. Next, by applying a specific sequence of  $K_{-}$ moves to the resulting permutation $\pi \circ \tilde \mu$, we obtain $\pi \circ \mu$. The claim will then follow since the application of Knuth moves to a permutations preserves its shape.
Define 
\(
\widetilde{\mu} \defeq (\mu_1+k-1, \dots, \mu_{k-1}+1,\mu_k).
\)
We will first compute $\pi \circ \widetilde{\mu}$.

Recall that this means, according to Definition~\ref{def:inner}, that we first move 1 $\mu_1+k-1$ steps to the right, followed by moving 2 $\mu_2+k-2$ steps to the right, and so on, up to moving $k$ right $\mu_k$ steps. We claim that each of these is a  $K_+$ move. 
Before we do so, let us first verify that $\widetilde \mu$ is valid for $\pi$. First, notice that since $\Delta < r$, the numbers 1 through $\Delta$ do appear in decreasing order in $\pi$ as in the example above. 
Next, notice that the number 1 moves the largest number of steps to the right and since
$\widetilde \mu$ is a strict partition, the value $i$ always moves less than $i-1$. 
Since 1 is at position at most $s$ in $\pi$, its final position is at most $s+\mu_1+k-1$ in $\pi \circ \widetilde \mu$. We want to make sure this is less than $n$. This follows since
\[
\mu_1+k-1 \leq \mu_1+\cdots+\mu_k = \Delta < r
\]
showing that $\pi \circ \widetilde \mu$ is well-defined.

Now, each elementary transposition of 
the inner jump partition $\widetilde \mu$ is a  $K_+$ move because it is always
of the form
\[
\cdots ,i,p,p-1, \cdots \rightarrow \cdots ,p,i,p-1, \cdots
\]
with $p>i$ because $\widetilde \mu$ is a strict partition. It is easy to check by the construction that $\pi \circ \widetilde \mu$ has $\Delta+ \binom{k}{2}$ more inversions than $\pi$. 

We will now perform $\binom{k}{2}$ $K_-$ moves starting from the permutation $\pi \circ \widetilde{\mu}$ to obtain $\pi \circ \mu$. We focus on the relative positions of $1,\dots,k-1,k$ in $\pi \circ \widetilde{\mu}$. They appear in the form
\[
\resizebox{\hsize}{!}{
$
\cdots, x, k, x-1,\dots,x-z_1,k-1,x-z_1-1,\dots,x-z_2,k-2,
\dots, x-z_{k-1},1,x-z_{k-1}-1,\cdots 
$
}
\]
with a gap of at least one between the numbers $1,\dots,k$ and 
all other elements in consecutive decreasing order. 
The actual values of 
$z_1,\dots,z_{k-1}$ depend on a simple way on $\widetilde \mu$, but are not important for the argument. 
Note that $x-z_{k-1}-1$ is larger in value than $k$. 
Starting from $\pi \circ \widetilde \mu$, we first move $k-1$ to the left by performing an elementary transposition. These are $K_-$ moves of the form
\[
\cdots ,p,i,p-1, \cdots \rightarrow \cdots ,i,p,p-1, \cdots
\]
with $p>i$. We continue similarly moving all the elements $k-2,\dots,1$ one by one in that order until they reach their respective positions in $\pi \circ \mu$. Each of these moves is a  $K_-$ move of the same form as above because of the gap between $i$ and $i+1$ for $i \in \{1,\dots,k-1\}$. Hence, $\pi \circ \mu 
\in W_\lambda^\Delta$.

In the case that $J = (\emptyset,\nu)$ of size $\Delta$, by Corollary \ref{cor:dualvalid}, $\nu \circ \pi \in W_\lambda^\Delta$. Note that in this case the corresponding transpositions are $KD_{+}$ and $KD_{-}$ moves.

When $J = (\mu,\nu)$ with $\mu = (\mu_1,\dots,\mu_k)$ and
$\nu = (\nu_1,\dots,\nu_j)$, we can construct  strict partitions $\widetilde \mu
 = (\mu_1+k-1, \dots, \mu_{k-1}+1,\mu_k)$ and
$\widetilde \nu  = (\nu_1+j-1, \dots, \nu_{j-1}+1,\nu_j)$ as above.

From the argument above, it is clear that one can separately apply either $\tilde \mu$ or $\tilde \nu$ to $\pi$. The only issue that could arise is that there could be interference between these two actions in the sense that both could act on the same entries. We will now show that such a situation cannot arise.
Following the notation of Example~\ref{eg:M=2}, we depict the jump partition on one of the minimal permutations listed above as
\[
\begin{array}{ccccccc|ccc}
[\nu_1+j-1] & \dots & [\nu_j] & & (\mu_k) & \dots\dots &(\mu_1+k-1) \\
r &  \dots & r-j+1 & \dots & k & \dots \dots & 1 &  n &  \dots  & r+1.
\end{array}
\]
Since $|\mu| + |\nu| = \Delta < r$, $k + j< r$ and no value among $1,\cdots r$ can be acted upon by both $\tilde \mu$ and $\tilde \nu$.
Among the values $r+1,\dots,n$, the values that are moved by $\tilde \nu$ are $n-\nu_1-j+2,\dots,n$ and those moved by $\tilde \mu$ are $r+1,\dots,r+\mu_1+k-1$. But since $\mu_1+k-1 \leq |\mu|$ and $\nu_1+j-1 \leq |\nu|$, no value is affected by both $\tilde \mu$ and $\tilde \nu$. Since $\tilde \mu$ and $\tilde \nu$ contain $\mu$ and $\nu$ in their Young diagrams, similar arguments show that $ \nu$ and $\mu$ act disjointly. This completes the proof of the claim.
\end{proof}

\begin{theorem}
\label{thm:Jpigivesuniqueperm}
Let $\lambda \vdash n$ be a partition such that $\lambda' = (s,r)$. Then, for each $\pi \in W_\lambda^0$ and jump partition $J$ with size $|J| = \Delta< r$ that is valid for $\pi$, the permutation $J(\pi)$ arises in a unique way. That is,
\begin{enumerate}
\item If $J_1 \ne  J_2$, then $J_1(\pi) \ne J_2(\pi)$. \label{part:sameminimal}
\item If $\pi_1 \ne \pi_2$ and $J_1$ (resp. $J_2$) is a valid jump partitions for $\pi_1$ (resp. $\pi_2$), then $J_1(\pi_1) \ne J_2(\pi_2)$. \label{part:diffminimal}
\end{enumerate}
\end{theorem}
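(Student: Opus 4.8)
The plan is to prove the two injectivity statements by reconstructing, from the output permutation $J(\pi)$, both the minimal permutation $\pi$ and the jump partition $J=(\mu,\nu)$. The key structural input is Lemma~\ref{lem:innerouterdisjoint}: when $\Delta<r$ the set of values moved by $\mu$ and the set of values moved by $\nu$ are disjoint (and, on the position side, the blocks they disturb are separated), so the two actions can be analyzed independently and their effects ``read off'' separately from $J(\pi)$. I would first treat the two minimal permutations $\pi_1=(r,\dots,1\,|\,n,\dots,r+1)$ and (when $s>r$) $\pi_2=(s,\dots,1\,|\,n,\dots,s+1)$ as the only candidates for $\pi$ by Theorem~\ref{thm:holhweg}(3), so part~\ref{part:diffminimal} in the case $\pi_1\ne\pi_2$ reduces to showing that no $J_1(\pi_1)$ equals any $J_2(\pi_2)$; this should follow by comparing coarse invariants — e.g. the position of the value $1$, or the length of the maximal initial decreasing run — which differ because the block boundary sits at position $r$ versus $s$ and $\Delta<r$ is too small to move things across that gap.

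For part~\ref{part:sameminimal} (fixed $\pi=\pi_1$, say) the heart of the matter is the reconstruction map. Starting from $\sigma=J(\pi)$, I would locate the values $1,2,3,\dots$ in $\sigma$ in order. Since $\Delta<r$, the values $1,\dots,\Delta$ still occur in decreasing order of position within the first block up to the insertions caused by $\mu$ and $\nu$, and by the disjointness lemma the value $i$ is displaced rightward by exactly $\mu^{}_i$ positions relative to its neighbor $i-1$ (inner jumps), letting us recover each part $\mu_i$ as a gap statistic: $\mu_i$ is the number of entries strictly between the cells of $i-1$ and $i$ in $\sigma$ minus one, or equivalently the number of ``large'' values that have been slotted in there. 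Symmetrically, passing to $\sigma^{-1}=J'(\pi)$ by Lemma~\ref{lem:Jinverse} converts the outer jumps into inner ones, so the same gap-reading recovers $\nu$ from $\sigma^{-1}$. Having recovered $(\mu,\nu)$, we recover $J$, and since $J$ is determined, $J(\pi)$ determines $J$: that is injectivity.

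The main obstacle I anticipate is making the ``gap statistic'' rigorous in the presence of both $\mu$ and $\nu$ simultaneously, and in particular handling the elements in the second block $\{r+1,\dots,n\}$, which are the ones that get inserted into the first block by the inner jumps while also being permuted among themselves by the outer jumps. One has to check that these two roles do not collide — which is exactly the content of the disjointness computation $\ell(\mu)+\ell(\nu)\le\Delta<r\le s$ together with $\mu_1+\nu_1<r+1$ in Lemma~\ref{lem:innerouterdisjoint} — and then argue that the inner-jump displacements are genuinely recoverable, i.e. that no two distinct partitions $\mu\ne\mu'$ (with the same $\nu$) produce the same interleaving pattern. I expect this to reduce to the elementary fact that a partition is determined by the multiset of rightward displacements of the labels $1,2,\dots,k$, which is immediate from Definition~\ref{def:inner}, but the bookkeeping of ``which large value landed in which gap'' must be pinned down carefully. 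A secondary, easier obstacle is the case $s>r$ in part~\ref{part:diffminimal}: one must confirm the coarse invariant (position of $1$, or size of the first decreasing block) really does separate the two families for every choice of valid $J_1,J_2$ of size less than $r$, which again follows from $\Delta<r$.

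Finally, I would note that combining Theorem~\ref{thm:jumpseqgivesperm} (every valid jump partition lands in $W_\lambda^\Delta$) with Theorem~\ref{thm:Jpigivesuniqueperm} (distinct data give distinct permutations) and Proposition~\ref{prop:jumpbicolor} (there are $p_2(\Delta)$ jump partitions) yields the lower bound $w_\lambda^\Delta\ge p_2(\Delta)$ when $s=r$ and $w_\lambda^\Delta\ge 2p_2(\Delta)$ when $s>r$; I would defer checking that these jump partitions are all \emph{valid} (so that the count is exactly $p_2(\Delta)$ resp.\ $2p_2(\Delta)$ of them) and the matching upper bound to the subsequent subsections, as indicated in the outline.
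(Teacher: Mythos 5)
Your part~\ref{part:sameminimal} is essentially the paper's argument: both rest on Lemma~\ref{lem:innerouterdisjoint} to decouple the inner and outer actions, and then on the fact that the position of the value $i$ in $J(\pi)$ (resp.\ in $J(\pi)^{-1}$, via Lemma~\ref{lem:Jinverse}) determines $\mu_i$ (resp.\ $\nu_i$). The paper runs your reconstruction map at a single index --- the largest $k$ with $\mu_{1,k}\neq\mu_{2,k}$ places the value $k$ at different positions --- rather than reconstructing all of $J$, but this is the same idea and your bookkeeping concerns are surmountable.

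The gap is in part~\ref{part:diffminimal}: neither of your proposed coarse invariants actually separates the two families. Writing $\pi_1=(s,\dots,1\,|\,n,\dots,s+1)$ and $\pi_2=(r,\dots,1\,|\,n,\dots,r+1)$ as in \eqref{minperms-twocol}, take $\lambda'=(3,2)$, $\Delta=1$, $J_1=(\emptyset,(1))$, $J_2=((1),\emptyset)$: then $J_1(\pi_1)=(4,2,1,5,3)$ and $J_2(\pi_2)=(2,5,1,4,3)$ both have the value $1$ at position $3$, so ``position of the value $1$'' fails. Take $\lambda'=(4,3)$, $\Delta=2$, $J_1=((1),(1))$, $J_2=(\emptyset,(1,1))$: then $J_1(\pi_1)=(5,3,2,7,1,6,4)$ and $J_2(\pi_2)=(4,3,1,7,6,5,2)$ both have maximal initial decreasing run of length $3$, so that invariant fails as well. (In each case the two outputs are of course distinct, but not for the reason you give, and you supply no argument certifying distinctness in general.) The paper uses a sharper pointwise invariant: since $\ell(\mu_2)+\ell(\nu_2)\le\Delta<r$, some value $i\in\{1,\dots,r\}$ is fixed by $J_2$, hence sits at position $r-i+1$ in $J_2(\pi_2)$; but in $\pi_1$ that same value $i$ sits at position $s-i+1>r-i+1$, and any inner or outer jump can only push it further to the right, so its position in $J_1(\pi_1)$ is at least $s-i+1$ and can never equal $r-i+1$. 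You need this (or an equally robust) argument to close part~\ref{part:diffminimal}.
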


\begin{proof}
Let $J_1=(\mu_1,\nu_1)$ and $J_2 = (\mu_2,\nu_2)$. We begin by showing part \eqref{part:sameminimal}. 
Let $I_1$ and $O_1$ (resp. $I_2$ and $O_2$) be the set of elements moved in $\pi$ under the action of $\mu_1$ and $\nu_1$ (resp. $\mu_2$ and $\nu_2$) respectively. Since $\Delta <r$, by Lemma \ref{lem:innerouterdisjoint}, $I_1$ and $O_1$ are disjoint, and so are $I_2$ and $O_2$. 
Since $J_1 \ne J_2$, either $\mu_1 \ne \mu_2$ or $\nu_1 \ne \nu_2$. Suppose $\mu_1 \ne \mu_2$ and let $k$ be the largest part in which they differ, i.e. $\mu_{1,k} \ne \mu_{2,k}$. Then, using Lemma \ref{lem:innerouterdisjoint}, the element $k \in I_1 \cap I_2$ is not in either of $O_1$ and $O_2$ and appears in different positions in $J_1(\pi)$ and $J_2(\pi)$.

For part \eqref{part:diffminimal}, in the case that $s=r$, there is exactly one minimal permutation, so there is nothing to prove. Therefore, we assume that $s>r$. Let $\pi_1$ and $\pi_2$ be two minimal permutations:
\begin{equation}
\begin{split}
\label{minperms-twocol}
\pi_1 & = (s, s-1, \dots, 1 \,  | \,  s+r , \dots, s+1)  = ( Y , X \,  | \,  Z) \\
\pi_2 & = (r, r-1, \dots, 1 \,  | \,  s+r , \dots, r+1)=  (X \, | \,  Z, Y)
\end{split}
\end{equation}
where we use $X, Y$ and $Z$ to denote the blocks $(r, r-1, \dots, 1)$,  $(s,  s-1, \dots, r+1)$ and  $(s+r,  \dots,  s+1)$ respectively.

We will show that applying a jump partition to these minimal permutations results in constraints on the structure of the resulting permutations. In particular, we will show that all the elements that are fixed  in $J_2(\pi_2)$ cannot appear at the same positions as in $J_1(\pi_1)$. 

First, we claim that for some contiguous set of indices $I$ such that $1 \le i \le r$ for $i \in I$, $(J_2(\pi_2))_i = r-i+1$ for $i \in I$. That is, when $J_2$ is applied to $\pi_2$, it fixes $i$ for each $i \in I$.
We verify that since $\Delta <r$, a contiguous block of $X$ consisting of $r-\ell(\mu_2)-\ell(\nu_2) > 1$ elements will remain fixed under the action of $J_2$. To see this, note that $\mu_2$ fixes the first $r-\ell(\mu_2)$ elements of $X$ while $\nu_2$ fixes the last $r-\ell(\nu_2)$ elements of $X$. Thus $r-\ell(\mu_2)-\ell(\nu_2)$ contiguous elements of $X$ are fixed by both $\mu_2$ and $\nu_2$ and hence by $J_2$.

Suppose now that $i \in X$ is fixed by $J_2$ at position $r-i+1$ in $\pi_2$. In the permutation $\pi_1$, the element $i$ is at position $s-i+1 > r-i+1$. When $J_1$ is applied to $\pi_1$, $i$ may either remain fixed, or its position may be changed due to an inner or outer jump. If it stays fixed, then its position in $J_1(\pi_1)$ is different from its position in $J_2(\pi_2)$. If the position of $i$ is changed due to either an inner or an outer jump, then its position in $J_1(\pi_1)$ only increases and therefore cannot be equal to $r-i+1$. 
\end{proof}
 
We now prove Theorem~\ref{thm:more than two col} using the ideas of Theorems~\ref{thm:jumpseqgivesperm} and
\ref{thm:Jpigivesuniqueperm}.
Recall that $\pi$ is a minimal permutation for $\lambda$, with $\lambda' = \langle t_1^{m_1}, \dots, t_k^{m_k} \rangle$, and $J = (\inner, \outer)$ is a jump partition of size $\Delta < t_k$.

\begin{proof}[Proof of Theorem~\ref{thm:more than two col}]
When $\inner$ or $\outer$ consists of a single partition, the argument is the same as that of Theorem~\ref{thm:jumpseqgivesperm}.
When $\outer$ is empty, the same argument works again because
the partitions $\mu_i,\mu_{i+1}$ in the inner jump partition $\inner$ (for example) cannot interact (in the sense that 
the elements moved by them are disjoint) because $\Delta$ is small. This is also the case when $\inner$ is empty. Suppose now that $\inner = (\mu_1,\dots,\mu_k)$ and $\outer = (\nu_1,\dots,\nu_k)$ are both nonempty.

The only cases where elements can be moved simultaneously by both the inner and the outer jump partition is
when both $\mu_i$ and $\nu_{i+1}$ are nonempty, and when both $\nu_i$ and $\mu_{i+1}$ are nonempty.
We will now show that in both cases, the moves made are Knuth or dual Knuth transformations. It will suffice to consider 
the first case, i.e., when both $\mu_i$ and $\nu_{i+1}$ are nonempty. The argument in the other case is similar.

Consider the $i$'th, $(i+1)$'th, and the $(i+2)$'th block in $\pi$, labelled as follows.
\[
\cdots \: |\: a_1, \dots, a_r \: | \: b_1, \dots, b_s \: | \: c_1, \dots, c_t \: | \cdots,
\]
where the $a_j$'s, the $b_j$'s and the $c_j$'s are consecutively decreasing, 
$b_s = a_1+1$ and $c_t = b_1+1$.
The action of $\mu_i$ interchanges the rightmost elements of the $a$-block with the leftmost ones of the $b$-block. 
We first perform the $\mu_i$ action, which is valid by Theorem~\ref{thm:jumpseqgivesperm}. 
At this point, some of the $b_j$'s are in the $a$-block and some of the $a_j$'s are in the $b$-block. If $\mu_{i+1}$ is nonempty, some of the $b_j$'s are in the $c$-block. Schematically, the action of $\mu_i$ and $\mu_{i+1}$ cause the blocks to look as follows.
\[
\cdots \: |\: a_1, \dots, a_{r-1}, b_1 \: | \: a_r, b_2, \dots, b_{s-1}, c_1 \: | \: b_s, c_2, \dots, c_t \: | \cdots,
\]
The key observation is that the relative position of the elements of the $b$- and $c$-blocks are unchanged among themselves at this point. We now perform the outer jumps according to $\nu_{i+1}$, which interchanges the leftmost elements of the $b$-block with the rightmost ones of the $c$-block. Since the $KD_+$ and $KD_-$ moves do not depend on the locations of the values being interchanged, the action of $\nu_{i+1}$ is valid after the action of $\mu_i$ if and only if it is valid before the action of $\mu_i$. We therefore perform the action of $\nu_{i+1}$ exactly as prescribed in Theorem~\ref{thm:jumpseqgivesperm}.
As always, the fact that $\Delta < t_k$ guarantees that these interchanges will be dual Knuth moves.

Now, we have to show that if $J_1$ and $J_2$ are valid distinct jump partitions for minimal permutations $\pi_1$ and $\pi_2$ respectively, then $J_1(\pi_1) \neq J_2(\pi_2)$. Since $J_1 \neq J_2$, there exists an index $i$ such that either $\mu_{1,i} \neq \mu_{2,i}$ or $\nu_{1,i} \neq \nu_{2,i}$. Suppose that the former is true. If $\pi_1 = \pi_2$, we repeat the idea of the proof of Theorem~\ref{thm:Jpigivesuniqueperm}(i). 

Now, suppose $\pi_1 \neq \pi_2$. By Theorem~\ref{thm:holhweg}(3), $c_1 \neq c_2$ are two distinct permutations of $\lambda'$ such that $\pi_1$ (resp. $\pi_2$) are minimal permutations corresponding to $c_1$ (resp. $c_2$). Let $i$ be the smallest integer such that $c_{1,i} \neq c_{2,i}$. Then focus on the $i$'th and $(i+1)$'th blocks in $\pi_1$ and $\pi_2$. The structure of these blocks is essentially the same as that in \eqref{minperms-twocol}, the sole difference being that $Z$ in both blocks is different. But that does not affect the argument, since one is only looking at fixed points within $X$ in both permutations.

We have thus shown that each jump partition gives rise to a unique permutation in $W_\lambda^\Delta$.
{
Since there are $\binom M {m_1,\dots,m_k}$ minimal permutations by Theorem~\ref{thm:holhweg}(2) and $p_{2(M-1)} (\Delta)$ jump partitions acting on each of them by Proposition~\ref{prop:jump partition} giving rise to distinct permutations, we have the required lower bound for the cardinality of $W_\lambda^\Delta$, completing the proof.}
\end{proof}

\subsection{Upper bound}
\label{sec:upperbd}

In this section, we will prove Theorem~\ref{thm:permgivesuniquejpimin}, i.e. we will show that every permutation in $W_\lambda^\Delta$ can be obtained from a unique pair of a valid jump partition and a minimal permutation of shape $\lambda$ (by applying the jump partition to the permutation).

\begin{theorem}
\label{thm:permgivesuniquejpimin}
Let $\lambda \vdash n$ be a partition such that $\lambda' = (s,r)$ . Let $\Delta <r$ and $\sigma \in W_\lambda^\Delta$. Then, there is a unique tuple $(\pi,J)$ where $\pi \in W_\lambda^0$, $J$ is a valid jump partition of size $|J| = \Delta$ and $\sigma = J(\pi)$.
\end{theorem}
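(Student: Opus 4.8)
\textbf{Proof plan for Theorem~\ref{thm:permgivesuniquejpimin}.}

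The plan is to produce, for a given $\sigma \in W_\lambda^\Delta$, an explicit ``inverse'' construction: from $\sigma$ read off a minimal permutation $\pi$ and a jump partition $J$ with $J(\pi) = \sigma$, and then show both are forced, hence unique. The first step is to understand the structure of $\sigma$. Since $\Delta < r$, I would argue that $\sigma$ still contains a large chunk of the ``layered'' structure of a minimal permutation: in the two-column case the RS shape being $\lambda' = (s,r)$ forces (via $\LIS(\sigma) = 2$ or an analysis of increasing/decreasing subsequences, together with Greene's theorem) that $\sigma$ decomposes into two decreasing subsequences, and the small value of $\Delta$ pins down which values sit where. Concretely, I expect to show that $\sigma$ has exactly two ``long'' maximal decreasing runs whose lengths are close to $s$ and $r$, and that the entries at positions far from the boundary between the two blocks are untouched, exactly as in Lemma~\ref{lem:innerouterdisjoint} and in the fixed-point analysis inside the block $X$ used in the proof of Theorem~\ref{thm:Jpigivesuniqueperm}.

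Next I would recover $\pi$. Using the structure above, the set of ``disturbed'' positions near the block boundary is small (total size $O(\Delta)$), and everything else is rigid; this should let me read off whether $\sigma$ came from $\pi_1 = (Y,X \mid Z)$ or $\pi_2 = (X \mid Z,Y)$ (when $s>r$) by looking at where the small values $1,\dots,r$ and the fixed points inside $X$ land — the argument at the end of the proof of Theorem~\ref{thm:Jpigivesuniqueperm} already shows these two cases produce disjoint sets of permutations, so at most one choice of $\pi$ is possible, and I need to show at least one works. Having fixed $\pi$, I would recover $J = (\mu,\nu)$: by Lemma~\ref{lem:innerouterdisjoint} the inner-disturbed and outer-disturbed entries of $\pi$ are disjoint, so in $\sigma$ I can separate the two groups of moved elements; the displacement of value $1$ (or the largest-moving value on each side) gives $\mu_1$ and $\nu_1$, the displacement of value $2$ gives $\mu_2$, and so on, and the constraints $\ell(\mu) \le r$, $\mu_1 < \#(\text{second block})$ etc.\ are automatically met because $\Delta < r$. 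One must check that the tuple of displacements read off this way is weakly decreasing (so it is a genuine partition) — this follows because no moved element can pass an element of its own original block, which is exactly the ``a partition, not just a composition'' observation made in Example~\ref{eg:M=2}.

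Finally, for well-definedness and uniqueness: that $J(\pi) = \sigma$ for the reconstructed pair follows by running the forward construction of Section~\ref{sec:lowerbd} and checking it reproduces $\sigma$ position by position (using that the moved sets are disjoint, so inner and outer actions do not interfere). Uniqueness is then immediate: any other valid pair $(\pi',J')$ with $J'(\pi') = \sigma$ would, by Theorem~\ref{thm:Jpigivesuniqueperm}, force $\pi' = \pi$ and $J' = J$. I expect the main obstacle to be the structural step: proving rigorously that $\Delta < r$ forces $\sigma$ to retain the layered skeleton with only a boundary region of size $O(\Delta)$ perturbed, and in particular that one can unambiguously distinguish the two minimal-permutation ``types'' and read off the moved values in decreasing-run order. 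This is where I would need to invoke Greene's theorem (the shape $\lambda$ records the sizes of unions of increasing/decreasing subsequences) or a direct insertion-tableau computation to control exactly which values can have been moved and by how much, rather than relying on the softer counting arguments used for the lower bound.
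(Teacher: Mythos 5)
Your overall strategy is the same as the paper's: exhibit an explicit inverse construction that reads off $(\pi,J)$ from $\sigma$, and get uniqueness for free from Theorem~\ref{thm:Jpigivesuniqueperm}. The reconstruction of $J$ from the displacements of the moved values, and the observation that the displacement vector is forced to be a partition, also match what the paper does. But there is a genuine gap exactly where you predict one: the structural claim that $\sigma$ retains the layered skeleton with only an $O(\Delta)$-sized boundary region perturbed is the entire technical content of the theorem, and your plan does not supply an argument for it. The tools you name do not suffice on their own. Greene's theorem and $\LIS(\sigma)=2$ only give you that $\sigma$ is a union of two decreasing subsequences; they say nothing about \emph{where} the second chain $R$ sits relative to a longest decreasing subsequence $S$. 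The paper's Lemma~\ref{lem:R-points-on-one-side} shows that $R$ must lie entirely in the regions $\{b,c,f\}$ or entirely in $\{g,h,j\}$ of Figure~\ref{fig:LDS-2column} (in particular, $R$ cannot straddle both sides of $S$ and cannot enter the interior regions $a_1,a_2$), and this is where the hypothesis $\Delta<r$ enters in an essential, quantitative way: e.g., if $R$ had points in both $b$ and $f$, one counts at least $r$ inversions between $R$ and the extreme points of $S$, contradicting the inversion bound. Ruling out the interior regions further requires choosing a \emph{leftmost} LDS and an induction on $n$ that deletes a point lying in every LDS (Corollary~\ref{cor:LDS-remove-pt}), with separate treatments of $s>r$ and $s=r$. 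None of this is a routine consequence of ``$\LIS=2$ plus small $\Delta$,'' and without it you cannot even begin the readout of $\mu$ and $\nu$, since that readout (peeling off the points of $b$ by outer jumps and the points of $f$ by inner jumps, counting how many points of $S$ each one must cross) presupposes knowing that $R$ is confined to one side and is itself decreasing.

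A secondary, smaller point: your appeal to Theorem~\ref{thm:Jpigivesuniqueperm} correctly disposes of uniqueness of the tuple, so the burden is entirely on existence; but your claim that ``at most one choice of $\pi$ is possible, and I need to show at least one works'' should not be read as reducing existence to a two-case check. Existence requires producing a \emph{valid} jump partition of size exactly $\Delta$ whose forward action reproduces $\sigma$, and verifying validity is again the structural lemma in disguise. So the plan is sound in outline but the proof is not yet there; the missing piece is precisely Lemma~\ref{lem:R-points-on-one-side} and its inversion-counting and induction arguments.
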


We will prove Theorem~\ref{thm:permgivesuniquejpimin} by considering the graphical representation of permutations, i.e, given $\pi \in S_n$, the set of points in $[1,n]^2$ given by coordinates $(i,\pi_i)$ for $1 \leq i \leq n$.
We know that there exists a longest decreasing subsequence (LDS) of length $s$ in $\pi$. Call the corresponding set of points $S$, and the remaining, $R$. We focus on the smallest rectangle enclosing $S$. The rectangle divides $[1,n]^2$ into various regions, as depicted in Figure~\ref{fig:LDS-2column}.

We will first need a result for the longest increasing subsequence (LIS) for two-rowed permutations.
Let $\pi$ be a permutation whose shape is $(s,r)'$. If a partial permutation $\pi'$ is formed by removing an element $p$ from $\pi$ in its one-line notation, we denote it by $\pi' = \pi \setminus \{p\}$. The notion of LIS also makes sense for $\pi'$ and $\pi'$ can be standardized to a permutation of size $[n-1]$ in the obvious way. We need the following result about the LIS.

\begin{proposition}
\label{prop:LIS-remove-pt}
Let $\lambda = (s,r)$ and $\pi$ be a permutation with shape $\lambda$. If $s>r$, there exists $p \in [n]$ such that $\pi \setminus \{p\}$ has an LIS of length $s-1$. Moreover, $p$ is in every LIS of $\pi$.
\end{proposition}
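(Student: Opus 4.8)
The plan is to reduce the proposition to a single structural fact about maximum increasing subsequences of $\pi$ and then read that fact off from the shape $(s,r)$.

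First I would record the elementary observation that deleting one entry changes the $\LIS$ by at most $1$, with a precise criterion for the drop. Any increasing subsequence of $\pi\setminus\{p\}$ is also an increasing subsequence of $\pi$, so $\LIS(\pi\setminus\{p\})\le \LIS(\pi)=s$; conversely, deleting $p$ from a longest increasing subsequence of $\pi$ leaves an increasing subsequence of $\pi\setminus\{p\}$ of length at least $s-1$. Hence $\LIS(\pi\setminus\{p\})\in\{s-1,s\}$, and $\LIS(\pi\setminus\{p\})=s-1$ holds precisely when $p$ lies in \emph{every} longest increasing subsequence of $\pi$ (if some longest one avoided $p$, it would survive the deletion; if all contain $p$, no increasing subsequence of $\pi\setminus\{p\}$ can have length $s$). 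So both assertions of the proposition follow once I exhibit an element $p\in[n]$ common to all longest increasing subsequences of $\pi$.

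To produce such a $p$ I would use the level decomposition of $\pi$ by longest increasing subsequences. For a value $x\in[n]$, let $h(x)$ be the length of the longest increasing subsequence of $\pi$ (read in one-line order) ending at the entry $x$. Then: (i) $1\le h(x)\le \LIS(\pi)=s$ because $\sh(\pi)=(s,r)$; (ii) each fibre $h^{-1}(j)$ is a decreasing subsequence of $\pi$ — if $x$ occurs before $y$ with $x<y$ then $h(y)\ge h(x)+1$ — so $\#h^{-1}(j)\le \LDS(\pi)=(s,r)'_1=2$; and (iii) if $x_1,\dots,x_s$ is a longest increasing subsequence listed in position order, then $h(x_i)=i$, since $h(x_i)\ge i$ always while $h(x_i)>i$ would let one prepend a chain of length $h(x_i)$ to $x_i<x_{i+1}<\cdots<x_s$ and exceed length $s$. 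By (iii) every longest increasing subsequence meets each of the fibres $h^{-1}(1),\dots,h^{-1}(s)$ in exactly one entry. Now a counting step finishes the construction: $\sum_{j=1}^{s}\#h^{-1}(j)=n=s+r$ with every summand in $\{1,2\}$, so since $s>r$ at least $s-r\ge 1$ of the fibres are singletons. Fixing $j_0$ with $h^{-1}(j_0)=\{p\}$, property (iii) forces every longest increasing subsequence of $\pi$ to use $p$ as its level-$j_0$ entry, so $p$ lies in all of them, and the first paragraph yields $\LIS(\pi\setminus\{p\})=s-1$ together with the ``moreover'' clause.

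I do not expect a genuine obstacle: this is essentially Mirsky's theorem for the increasing-order poset of $\pi$, and the only points needing care are the clean formulation of (ii) and (iii) and the identification $\LDS(\pi)=2$ from the two-row shape. If one prefers to remain inside the geometric picture used in the rest of the section, the same argument transcribes verbatim: plotting $\pi$ as points in the grid, the fibres $h^{-1}(j)$ are the successive antichains of size at most two, any longest increasing chain selects one point from each, and because $s>r$ some antichain consists of a single point, which then lies on every longest increasing chain.
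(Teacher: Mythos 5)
Your proof is correct, but it takes a genuinely different route from the paper's. The paper argues through the dynamics of Schensted row-insertion: it invokes the standard fact that the $i$'th element of any longest increasing subsequence is inserted into the $i$'th column of the first row, observes that only $r$ bumpings out of the first row occur (since the second row has length $r$), and concludes that at least $s-r\ge 1$ first-row columns are never bumped, so the element sitting in such a column is the $j$'th term of \emph{every} LIS. Your argument instead stays entirely at the level of the permutation: you decompose $[n]$ into the fibres of $h(x)=$ length of the longest increasing subsequence ending at $x$, note that each fibre is a decreasing subsequence and hence has size at most $\LDS(\pi)=2$, that every LIS meets each of the $s$ fibres exactly once, and that the count $s+r=\sum_j \#h^{-1}(j)$ forces $s-r$ singleton fibres, each of which supplies the desired $p$. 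Your approach buys self-containedness --- it needs only Schensted's theorem identifying $\LIS$ and $\LDS$ with the first row and first column of the shape, not the finer insertion-position fact --- and your opening reduction (that $\LIS$ drops by one upon deleting $p$ if and only if $p$ lies in every LIS) makes explicit an equivalence the paper only gestures at in its closing sentence. The paper's proof is shorter once the insertion fact is granted, and incidentally pinpoints \emph{which} columns of the $P$-tableau the common elements occupy. One small point in the paper's favor on precision: the number of never-bumped first-row positions is at least $s-r$ rather than exactly $s-r$ (a column can be bumped more than once), whereas your fibre count gives exactly $s-r$ singletons; either way the existence of at least one is all that is needed.
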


\begin{proof}
It is clear that $\pi$ has an LIS of length $s$. Let $p_1,\dots,p_s$ be an LIS of $\pi$. We use the following standard fact about any LIS $q_1,\dots,q_t$ of a permutation $\sigma$:
$q_i$ is inserted in the $i$'th column in the first row for all $i$ in the Robinson-Schensted algorithm. 
Now, since the second row of $\pi$ has length $r<s$, there are exactly $r$ bumpings, and hence $s-r$ positions in the first row which are not bumped. All the elements in those positions are a part of every LIS of $\pi$. Let $p$ be one of those elements.
If $p$ is removed, then it is clear that the resulting partial permutation has an LIS of length $s-1$. Clearly, every LIS must therefore contain $p$ since otherwise, the length of an LIS for $\pi \setminus \{p\}$ would be $s$.
\end{proof}

By considering the reverse permutation $\pi^R(i) = {(\pi_n,\dots,\pi_1)}$ in Proposition~\ref{prop:LIS-remove-pt}, we immediately obtain the following corollary.

\begin{corollary}
\label{cor:LDS-remove-pt}
Let $\pi \in S_n$ with shape $(s,r)'$ with $s>r$. Then there exists $p \in [n]$ such that $\pi \setminus \{p\}$ has an LDS of length $s-1$. Moreover $p$ is in every LDS of $\pi$.
\end{corollary}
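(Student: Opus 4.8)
The plan is to derive this corollary from Proposition~\ref{prop:LIS-remove-pt} by passing to the reverse of $\pi$. Given $\pi=(\pi_1,\dots,\pi_n)\in S_n$, let $\pi^{\mathrm r}$ denote the permutation with one-line notation $(\pi_n,\dots,\pi_1)$. Reversing the word preserves the multiset of values used by any subsequence and turns increasing subsequences into decreasing ones and conversely: if positions $i_1<\dots<i_k$ carry an increasing run with values $v_1<\dots<v_k$ in $\pi^{\mathrm r}$, then the positions $n+1-i_k<\dots<n+1-i_1$ carry a decreasing run in $\pi$ using exactly the values $\{v_1,\dots,v_k\}$, and vice versa. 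Consequently $\LIS(\pi^{\mathrm r})=\LDS(\pi)$, and a value belongs to every longest increasing subsequence of $\pi^{\mathrm r}$ if and only if it belongs to every longest decreasing subsequence of $\pi$.

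Next I would invoke the standard fact that reversing a permutation transposes its Robinson--Schensted shape, i.e.\ $\sh(\pi^{\mathrm r})=\sh(\pi)'$ (this follows, for instance, from Greene's theorem, or see \cite[Appendix~1]{Stanley1999}). Since $\sh(\pi)=(s,r)'$ by hypothesis, we obtain $\sh(\pi^{\mathrm r})=(s,r)$, a genuine two-row shape, and the assumption $s>r$ is precisely the hypothesis under which Proposition~\ref{prop:LIS-remove-pt} applies to $\pi^{\mathrm r}$. That proposition produces a value $p\in[n]$ lying in every LIS of $\pi^{\mathrm r}$ and such that $\pi^{\mathrm r}\setminus\{p\}$ has an LIS of length $s-1$.

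Finally I would transport the conclusion back to $\pi$. Deleting the value $p$ and then restandardizing commutes with reversal, since restandardization depends only on the relative order of the surviving entries, which reversal does not alter; hence $\pi^{\mathrm r}\setminus\{p\}$ and $(\pi\setminus\{p\})^{\mathrm r}$ denote the same permutation, so $\LDS(\pi\setminus\{p\})=\LIS(\pi^{\mathrm r}\setminus\{p\})=s-1$. Combined with the value-preservation noted in the first paragraph, this shows $p$ lies in every LDS of $\pi$, which is exactly the claim. There is no substantial obstacle here; the only points to check carefully are the bookkeeping in the subsequence correspondence under reversal and the shape-transpose identity $\sh(\pi^{\mathrm r})=\sh(\pi)'$, both of which are classical.
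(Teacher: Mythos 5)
Your proof is correct and follows the same route as the paper, which derives the corollary in one line by applying Proposition~\ref{prop:LIS-remove-pt} to the reverse permutation; you have simply spelled out the standard facts (reversal swaps increasing and decreasing subsequences, transposes the RS shape, and commutes with deletion) that the paper leaves implicit. No issues.
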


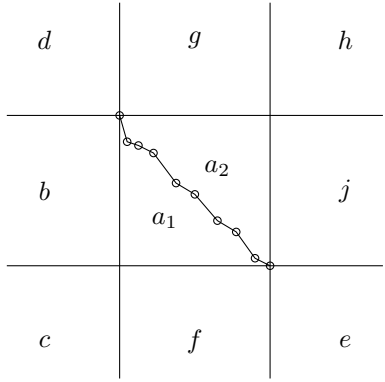
\begin{figure}[htbp!] 
\begin{center}
\begin{tikzpicture} [>=triangle 45, x=0.5cm,y=0.5cm]
\draw (1,1) node {$c$};
\draw (1,5) node {$b$};
\draw (1,9) node {$d$};
\draw (5,1) node {$f$};
\draw (5.6,5.6) node {$a_2$};
\draw (4.2,4.2) node {$a_1$};
\draw (5,9) node {$g$};
\draw (9,1) node {$e$};
\draw (9,5) node {$j$};
\draw (9,9) node {$h$};
\draw [-] (3,0) -- (3,10);
\draw [-] (7,0) -- (7,10);
\draw [-] (0,3) -- (10,3);
\draw [-] (0,7) -- (10,7);
\draw [-] (3,7) node[shape=circle,inner sep = 1pt,draw] {}
				-- (3.2,6.3) node[shape=circle,inner sep = 1pt,draw] {}
				-- (3.5,6.2) node[shape=circle,inner sep = 1pt,draw] {} 
				-- (3.9,6.0) node[shape=circle,inner sep = 1pt,draw] {} 
				-- (4.5,5.2) node[shape=circle,inner sep = 1pt,draw] {} 
				-- (5.0,4.9) node[shape=circle,inner sep = 1pt,draw] {} 
				-- (5.6, 4.2) node[shape=circle,inner sep = 1pt,draw] {} 
				-- (6.1, 3.9) node[shape=circle,inner sep = 1pt,draw] {} 
				-- (6.6, 3.2) node[shape=circle,inner sep = 1pt,draw] {} 
				--  (7,3) node[shape=circle,inner sep = 1pt,draw] {};
\end{tikzpicture}
\caption{The longest decreasing subsequence $S$ enclosed in the rectangle formed by the topmost and bottommost point in $S$. 
The remaining points of the permutation are in one of the marked regions.
\label{fig:LDS-2column}}
\end{center}
\end{figure}

\begin{lemma} \label{lem:R-points-on-one-side}
Let $\pi \in W_{(s,r)'}^\Delta$ with $\Delta < r$.
Then there exists a longest decreasing subsequence of $\pi$ with points $S$ enclosed by a rectangle 
which demarcates $[1,n]^2$ into regions as shown in Figure~\ref{fig:LDS-2column} 
such that the points in $R$ must either all appear in regions $\{b,c,f\}$ or all in $\{g,h,j\}$ and form a decreasing subsequence.
\end{lemma}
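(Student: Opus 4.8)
The goal is to show that for $\pi \in W_{(s,r)'}^\Delta$ with $\Delta < r$, some longest decreasing subsequence (LDS) $S$ of length $s$ can be chosen so that all remaining points $R$ lie together in the ``lower-left-plus-left'' regions $\{b,c,f\}$ or together in the ``upper-right-plus-right'' regions $\{g,h,j\}$, and moreover that $R$ itself is a decreasing subsequence. The first step is to control the size of $R$: since $\pi$ has shape $(s,r)'$, we have $|R| = n - s = r$, and since $\pi$ has exactly $\Delta < r$ more inversions than the minimal permutation of this shape, $R$ is ``almost'' a decreasing run. More precisely, I would argue that the longest increasing subsequence within $R$ has length at most $2$ (or more generally is tightly bounded by $\Delta$): if $R$ contained a long increasing subsequence, one could splice it against $S$ to build a Young-diagram row structure forcing too many inversions, or directly count that each ``ascent'' inside $R$ costs inversions against the decreasing backbone $S$. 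This is where the hypothesis $\Delta < r$ does the real work, and I expect the bookkeeping here to be the main obstacle: one must carefully relate the number of inversions removed from the minimal configuration to the combinatorial complexity of how $R$ interleaves with $S$.

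The second step is the choice of $S$ and the rectangle. Among all LDS of $\pi$, I would pick one and look at the smallest axis-parallel rectangle $\mathcal{R}$ whose corners are (horizontally) the leftmost and rightmost columns used by $S$ and (vertically) the smallest and largest values in $S$; this partitions $[1,n]^2$ into the nine regions labeled $c,b,d,f,a_1/a_2,g,e,j,h$ as in Figure~\ref{fig:LDS-2column}, with $S$ living on the staircase through the central region. A point of $R$ cannot lie in the central region $a_1 \cup a_2$ nor in regions $d$ or $e$ for the usual RS reasons (a point of $R$ in region $a$ could be inserted into $S$ to lengthen the LDS, contradicting maximality; points in $d$ or $e$ would combine with part of $S$ to give an LDS of length $s+1$). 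So every point of $R$ lies in one of $\{b,c,f\}$ (the ``southwest'' group: smaller column \emph{or} smaller value than the extremes of $S$, appropriately) or in $\{g,h,j\}$. I would then use Corollary~\ref{cor:LDS-remove-pt}: when $s>r$ there is a forced point $p$ in every LDS, and removing it leaves shape with longest decreasing subsequence $s-1$; iterating / using this structural rigidity pins down which side the $R$-points must sit on relative to $p$.

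The third and key structural step is to rule out that $R$ has points on \emph{both} sides. Suppose $R$ had a point $u$ in $\{b,c,f\}$ and a point $v$ in $\{g,h,j\}$. Then $u$ is (weakly) to the lower-left of the rectangle and $v$ to the upper-right, so depending on the precise subregions, either $u,v$ together with a suitable sub-run of the staircase $S$ produce a decreasing subsequence of length $s+1$ (contradiction), or $u$ and $v$ are in increasing position relative to each other and relative to enough of $S$ to force a $3$-term increasing subsequence inside $R \cup \{$one backbone point$\}$, pushing the inversion count above the minimum by at least $r$ — contradicting $\Delta < r$. Combined with Step~1 (that $R$ has LIS at most $2$, indeed exactly the decreasing-run-with-few-defects structure), a short case analysis on the subregions $b,c,f$ versus $g,h,j$ forces all of $R$ onto one side; and the same bound on ascents inside $R$, now that all of $R$ is confined to three mutually ``stacked'' regions, upgrades to: $R$ is a decreasing subsequence. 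Finally, by Corollary~\ref{cor:dualvalid} (symmetry under inverse) or simply by reflecting the picture, the two cases $\{b,c,f\}$ and $\{g,h,j\}$ are interchanged, so it suffices to have shown one of them is realizable, completing the proof.

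I expect the genuine difficulty to be entirely in quantifying ``$\Delta < r$ forbids complexity in $R$'': translating a bound on excess inversions into the statement that $R$ is essentially a single decreasing run and lives on one side of the LDS rectangle. The region-exclusion arguments ($a$, $d$, $e$ empty) are standard RS manipulations, and the final reflection symmetry is immediate; but the inversion-counting lemma underpinning Step~1 and Step~3 is the crux, and I would isolate it as a separate sub-lemma stating that any increasing pair inside $R$, or any $R$-point on the ``wrong'' side, contributes at least $r$ to the inversion excess over the minimal permutation.
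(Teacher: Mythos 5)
Your overall framework (the three bounds --- LDS length $s$, LIS length $2$ from the two-column shape, and the excess-inversion budget $\Delta<r$ --- plus a region analysis around the bounding rectangle of an LDS) matches the paper's. But there are two genuine gaps, and they sit exactly where the paper's proof does its real work.

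First, you assert in Step~2 that no point of $R$ can lie in the central region $a_1\cup a_2$ ``for the usual RS reasons,'' namely that such a point could be inserted into $S$ to lengthen the LDS. That is false. A point $p$ strictly inside the rectangle with abscissa between $x_i$ and $x_{i+1}$ but ordinate \emph{below} $y_{i+1}$ (or above $y_i$) cannot be spliced into the decreasing staircase, and it forms only a $2$-term increasing pair with nearby points of $S$, which the LIS bound permits. Only regions $d$ and $e$ are excluded for free. Emptiness of $a_1$ (and symmetrically $a_2$) is in fact the last and hardest step of the paper's argument: it requires first establishing, for a carefully chosen \emph{leftmost} LDS, that $R_1\subset a_1\cup b$ or $a_1\cup f$ forces $R_2=\emptyset$, and only then does the inversion bound against the extreme points of $S$ kill the points in $a_1$. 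Your proof cannot simply skip past the central region.

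Second, your Step~3 dichotomy (``either a longer LDS, or a $3$-term increasing subsequence, or at least $r$ extra inversions'') does not cover the hard configuration: one point of $R$ in $b$ and one in $j$ (with the rest possibly in $a_1\cup a_2$). Such a pair need not extend to an increasing triple through $S$ (there may be no staircase point with ordinate strictly between theirs), need not extend the LDS, and a single such pair does not by itself cost $r$ inversions --- so your proposed sub-lemma is false as stated. The paper handles precisely this case by a minimal-counterexample induction: when $s>r$ it deletes the forced point $p$ of Corollary~\ref{cor:LDS-remove-pt}, checks that the excess $\Delta$ does not increase (deleting $p$ removes at least $s-1$ inversions while $\binom{s}{2}$ drops by $s-1$), and reduces to a smaller permutation; when $s=r$ it deletes the topmost point of a leftmost LDS and derives a contradiction with the leftmost choice. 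You cite Corollary~\ref{cor:LDS-remove-pt} but only gesture at ``iterating / structural rigidity''; without the explicit induction and the leftmost-LDS normalization, the argument does not close. (A smaller quibble: the fact that $R$ is decreasing follows from the LIS bound of $2$ together with the extreme points of $S$, not primarily from $\Delta<r$; the inversion budget is what rules out the $b$-and-$f$ and central-region configurations.)
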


\begin{proof}
The proof will be divided into five claims.
The first three of these claims are true for any longest decreasing subsequence $S$ and we prove these assuming $S$ is arbitrary. To prove the fourth {and the fifth}, we require that the subsequence satisfy an extra property, which we specify below. 

First of all, note that for any $S$ none of points in $R$ can be in regions labeled $d$ and $e$ since that would imply the existence of a longer LDS together with the points in $S$.  Let $R_1$ be the set of points of $R$ in $a_1 \cup b \cup c \cup f$ and let $R_2$ be the points in $a_2 \cup g \cup h \cup j$. 
We will consider the case that there exists a point in $a_1 \cup b \cup c \cup f$ and show that $R_2$ and $a_1$ are empty.
The reverse case is similar.
Throughout the proof, we will use the phrase ``increase/decrease of points'' to mean the increase/decrease
of the ordinates of points as we move from left to right.

An outline of the steps we take in the proof is as follows. We will show the following three claims for any longest decreasing subsequence $S$.
\begin{enumerate}[label=(\roman*)]
\item Points in $R_1$ and $R_2$ must decrease from left to right.

\item If there is a point of $R_1$ in $c$, then $R_2 = \emptyset$. 

\item If there are points of $R_1$ in both $b$ and $f$, then $R_2 = \emptyset$.
\end{enumerate}

Among the LDSs of a permutation $\pi$, we refer to those whose highest point has smallest possible $x$-coordinate as a \textit{leftmost} LDS or LLDS. Note that the LLDS need not be unique.
\begin{enumerate}[label=(\roman*),resume]
\item For any LLDS $S$ of $\pi$, if the points of $R_1$ appear in $b \cup a_1$ or in $a_1 \cup f$, then $R_2 = \emptyset$.

\item[(v)] There are no  points in $a_1$.
\end{enumerate} 

We list certain conditions that we will use repeatedly in the proof.
\begin{itemize}
\item {\bf LDS bound:} The length of the longest decreasing subsequence is $s$.
\item {\bf LIS bound:} The length of the longest increasing subsequence is $2$.
\item {\bf Inversion bound:} {By the assumption of the lemma, $\Delta < r$. Therefore,} the number of inversions is strictly less than $\binom s2 + \binom r2 + r$. Since there are $\binom s2$ and $\binom r2$ inversions among the points in $S$ and $R$ respectively, the number of inversions between the points of $S$ and $R$ must be less than $r$.
\end{itemize}

Let $S$ be an arbitrary longest decreasing subsequence of $\pi$.
\begin{enumerate}[wide, labelwidth=!, labelindent=0pt,label=(\roman*)]
\item \label{item:noincrease}
Consider two points in $R_1$ that are increasing. First, note that these points cannot be in $b \cup c \cup f$ because together with a third point of $S$ (either the topmost or bottom-most), they violate the LIS bound. So, at least one of the points is in $a_1$. The higher of the two points is thus necessarily in $a_1$. Call this point $p_1$. 
There cannot be a point of $S$ to the northeast of $p_1$ because that violates the LIS bound. But now, by including $p_1$ in $S$, we violate the LDS bound by including $p_1$ in the LDS; see Figure~\ref{fig:LIS-LDS-bounds} for an illustration. Therefore, the points of $R_1$ must be in decreasing order. 
Similarly, the points of $R_2$ must also 
decrease from left to right.

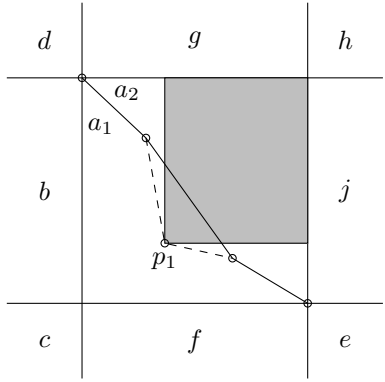
\begin{figure}[htbp!] 
\begin{center}
\begin{tikzpicture} [>=triangle 45, x=0.5cm,y=0.5cm]
\draw (1,1) node {$c$};
\draw (1,5) node {$b$};
\draw (1,9) node {$d$};
\draw (5,1) node {$f$};
\draw (3.2,7.6) node {$a_2$};
\draw (2.5,6.7) node {$a_1$};
\draw (5,9) node {$g$};
\draw (9,1) node {$e$};
\draw (9,5) node {$j$};
\draw (9,9) node {$h$};
\draw [-] (2,0) -- (2,10);
\draw [-] (8,0) -- (8,10);
\draw [-] (0,2) -- (10,2);
\draw [-] (0,8) -- (10,8);
\node [below] at (4.2, 3.6) {$p_1$};
\draw [dashed] (3.7, 6.4) -- (4.2, 3.6) -- (6.0, 3.2); 
\draw [fill = lightgray] (4.2, 3.6) rectangle (8,8);
\draw [-] (2,8) node[shape=circle,inner sep = 1pt,draw] {}
				-- (3.7, 6.4) node[shape=circle,inner sep = 1pt,draw] {} 
				-- (6.0, 3.2) node[shape=circle,inner sep = 1pt,draw] {} 
				--  (8,2) node[shape=circle,inner sep = 1pt,draw] {};
\draw (4.2,3.6) node[shape=circle,inner sep = 1pt,draw] {};
				
\end{tikzpicture}
\caption{Illustration of the violation of the LDS bound used in part~\ref{item:noincrease}. No points of $S$ lie in the shaded region to the northwest of $p_1$. Then $p_1$ can be added to $S$ to form a larger LDS. Similar ideas are used in parts~\ref{item:regionc} and \ref{item:bandf}.
\label{fig:LIS-LDS-bounds}}
\end{center}
\end{figure}

\item \label{item:regionc}
Suppose that there is a point of $R_1$ in region $c$. Then, none of the other points of $R_2$ can be in $g \cup h \cup j$ by the LIS bound. If there is a point of $R_2$ in $a_2$ ($p_2$, say), then there can be no points of $S$ to the southwest of $p_2$ by the LIS bound, analogous to the argument used in part~\ref{item:noincrease} as shown in Figure~\ref{fig:LIS-LDS-bounds}. But now, by including $p_2$ in $S$, we violate the LDS bound.

\item \label{item:bandf}
Suppose there is a point of $R_1$ each in $b$ and $f$. Then there cannot be points of $R_2$ in $\{g,h,j\}$ by the LIS bound. Thus, all points of $R_2$ must be in $a_2$. 

If any point in $a_2$ is to the northeast of any point in $b$ or in $f$, then by an argument similar to that in part~\ref{item:regionc}, either the LIS or the LDS bound is violated.
Therefore, the points of $b,a_2,f$ form a decreasing subsequence of length $r$. There are at least $r$ inversions between the extreme points of $S$ and the points of $R$, which violates the inversion bound.

\item \label{item:a1andb}
In this part of the proof, we will argue by contradiction. 
If $\pi$ is a counter example to the claim of (iv), then there is an LLDS $S$ of $\pi$ such that the points of $R_1$ appear in $b \cup a_1$ or $a_1 \cup f$ and $R_2 \ne \emptyset$.

First, suppose $s>r$. Let $\pi$ be a minimal such counterexample meaning that every subpermutation $\pi'$ of $\pi$ which satisfies the inversion bound satisfies (iv) (with the sets $a_1',a_2',b', \ldots$ defined analogously).
Let $\pi' = \pi \setminus \{p\}$ where $p$ is the point guaranteed to exist by Corollary \ref{cor:LDS-remove-pt}. By the corollary, the LDS of $\pi'$ has length $s-1$. Let $S' = S \setminus \{p\}$ so that $S'$ is an LDS of $\pi'$.
We will show that $\pi$ cannot be a minimal counterexample. Let $R_1'$ and $R_2'$ be the points corresponding to $S'$. Since the points of $R_1$ appear in $a_1 \cup b$ (resp. $a_1 \cup f$), it can be verified that the points of $R_1'$ appear in $a_1' \cup b'$ (resp. $a_1' \cup f'$). Also, since $R_2' = R_2$, we have that $R_2' \ne \emptyset$. Finally, since $\pi$ satisfies the inversion bound, we have
\begin{align}
\Delta = \inv(\pi) - \binom{s}{2} - \binom{r}{2} < r.
\end{align}
By deleting $p$ from $\pi$, we remove at least $s-1$ inversions (those involving $p$ and the other elements of $S$). Therefore,
\begin{align}
\inv(\pi') \le \inv(\pi) - (s-1).
\end{align}
Also, we have
\begin{align}
\Delta' = \inv(\pi') - \binom{s-1}{2} - \binom{r}{2}.
\end{align}
Combining these three bounds, we get
\begin{align*}
\Delta' \le \inv(\pi) - (s-1) - \binom{s-1}{2} - \binom{r}{2} = \Delta < r.
\end{align*}
That is, $\pi'$ satisfies the inversion bound, contradicting the minimality of $\pi$.

Next, we consider the case that $s=r$. The points of $R_1$ may lie in $a_1 \cup b$ or in $a_1 \cup f$. Applying the arguments
from parts~\ref{item:noincrease}, \ref{item:regionc} and \ref{item:bandf} to $R_2$, the points of $R_2$ can only lie in $g \cup a_2$ or $a_2 \cup j$. It suffices to suppose that the points of $R_2$ appear in $a_2 \cup j$ and the arguments in the other case are analogous. 

Let us suppose that $\pi$ has shape $(s,s)'$. 
Fix an LLDS of $\pi$.
Let $\pi'$ be the subpermutation of $\pi$ obtained by deleting the topmost point of $S$, which we call $v_1$. 
First of all, note that since the LDS of $\pi$ is $s$, the LIS of $\pi'$ must be two (unless $s=1$ in which case the lemma is trivial). Thus $\lambda'$, the shape of $\pi'$, also has two columns.
It is impossible that the LDS of $\pi'$ be of length $s-1$ since $\pi'$ has a total of $2s-1$ elements and its partition has 2 columns.  Thus the LDS of $\pi'$ must be of length $s$.

\noindent \textit{Case 1: $R_1 \subset a_1 \cup f$ and $R_2 \subset a_2 \cup j$:}

In this case, since all the elements of $\pi'$ are to the southeast of $v_1$, in particular, the elements of the LDS in $\pi'$ are to the southeast and so $v_1$ could be added to them to obtain a longer decreasing subsequence in $\pi$.

\noindent \textit{Case 2: $R_1 \subset a_1 \cup b$ and $R_2 \subset a_2 \cup j$:}

An LDS of $\pi'$ of length $s$ cannot have all of its points to the southeast of $v_1$, since otherwise we would get a longer LDS in $\pi$. Therefore, it must use points in $b$. However since it is also an LDS of $\pi$ this contradicts our assumption that we chose a LLDS in $\pi$.

\item \label{item:noa1}
If there is a point in $a_1$, then there cannot be a point in $c$ by part \ref{item:noincrease}. 
Moreover, there cannot be points in  $b \cup f$ because we would violate the inversion bound with the
extreme points of $S$. This argument also shows that there has to be at least one point in $c$.

\end{enumerate}

This completes the proof of the lemma.
\end{proof}

We are now in a position to prove the main result of this section.

\begin{proof}[Proof of Theorem~\ref{thm:permgivesuniquejpimin}]

Consider the permutation diagram of $\sigma$. Let $S$ be an LDS of $\sigma$ as is guaranteed by Lemma \ref{lem:R-points-on-one-side}.
Without loss of generality, assume that all points of $R$ appear in regions $b,c,f$ (see Figure~\ref{fig:LDS-2column}) with $p,q,r-p-q$ points respectively. 
We will first build the outer jump partition $\nu$.
Since the points of $R$ are in decreasing order, note that the coordinates of the points in $b$ and $c$ are $(1,n_1), \dots, (p,n_{p}),(p+1,n_{q}-1),\dots,(p+q,n_{q}-q)$, where  $n_1 > \cdots > n_{p} > n_q$. 
Consider the point $(p,n_p)$ which has the lowest ordinate in region $b$.
Suppose that there are $k_p$ points in $S$ which have lower ordinates than this one.
First of all, note that these $k_p$ points must necessarily be consecutive, and hence  have coordinates $(n-k_p+1,n_p-1),\dots,(n,n_p-k_p)$.

We now perform the transformation replacing the points $(p,n_p),(n-k_p+1,n_p-1)$ by
$(p,n_p-1),(n-k_p+1,n_p)$. It is easy to see that this new permutation also has shape $\lambda'$ and has one less inversion than $\sigma$. A little thought shows that the transformation from the latter to the former is encoded by a single outer jump.
We then interchange the pair 
$(p,n_p-1),(n-k_p+2,n_p-2)$ by $(p,n_p-2),(n-k_p+2,n_p-1)$, and continue this way until we replace $(p,n_p-k_p+1),(n,n_p-k_p)$ by $(p,n_p-k_p),(n,n_p-k_p+1)$. At this point, the lower boundary of the region $a$ shifts up by 1 and the point $(p,n_p-k_p)$ is now in region $c$. We have thus performed $k_p$ outer jumps to reduce the number of points in $b$ by 1.

Now suppose there are $k_{p-1}$ points in (the modified) $S$ below $(p-1,n_{p-1})$. We then repeat the same process with $k_{p-1}$ outer jumps to again modify $S$ and further reduce the number of points in $b$ to $p-2$. Continue this way.
At the end of these sequence of moves, the number of points in $b$ will be empty and
the points in $c$ would be $(1,n_p),(2,n_p-1),\dots,(p,n_p-p),(p+1,n_q-1),\dots,(p+q,n_{q}-q)$ and this forces $n_q = n_p-p$.
If, at each stage there were $k_{p-i}$ points below $(p-i,n_{p-i})$, then, 
we have performed the inverse of the outer jump partition $\nu = (k_1,\dots,k_p)$, which clearly satisfies $k_1 \geq \cdots \geq k_p$.

We will now build the inner jump partition $\mu$.
Now consider the points in region $f$, which must have coordinates $(m_1,n_p-p-1),\dots, (m_s,n_p-p-s)$, where $s = r-p-q$ and $m_1 > \cdots > m_s$. We now repeat the same sequence of arguments as above, where each interchange corresponds to exchanging these points with points in $S$ to their left (and above). These correspond to inner jumps. If there are $(\ell_1,\dots,\ell_s)$ points to left of points with abscissas $(m_1,\dots,m_s)$, then these interchanges correspond to the inverse of the inner jump partition $\mu = (\ell_s,\dots,\ell_1)$. 

At the end of these sequence of moves, all points of $R$ will be consecutively decreasing in the region $c$, and hence we obtain the permutation $\pi = \pi_2$ above with the jump partition being $J = (\mu,\nu)$.
Notice that the interchanges in regions $b$ and $f$ can be performed independently.
If, initially all the points were in regions $\{g,h,j\}$, we would have had $\pi = \pi_1$. 
\end{proof}

\subsection{Proof of Theorem~\ref{thm:two-col}}
\label{sec:complete-proof}
From Theorem~\ref{thm:jumpseqgivesperm}, it follows that every jump partition of size $\Delta$ acting on a minimal permutation 
yields a permutation in $W_\lambda^\Delta$ and
Theorem~\ref{thm:Jpigivesuniqueperm} guarantees that no two distinct jump partitions give rise to the same permutation. Finally, Theorem~\ref{thm:permgivesuniquejpimin}  ensures that every permutation in $W_\lambda^\Delta$ can be obtained in a unique way starting from a jump partition acting on a minimal permutation. The proof of Theorem~\ref{thm:two-col} follows. As a consequence, we obtain the following structure theorem about such permutations.

\begin{corollary}
\label{cor:fixed-pts}
For any permutation $\pi$ in $W_\lambda^\Delta$ with $\lambda' = (s,r)$ and $\Delta < r$, one of the following is true.
\begin{itemize}
\item $\pi_i = s+1-i$ for $s-\Delta$ consecutive numbers starting at some position $i$, where $1 \leq i \leq \Delta+1$ and
$\pi_i = 2s+r+1-i$ for $r-\Delta$ consecutive numbers starting at some position $i$, where $s+1 \leq i \leq s+\Delta+1$.
\item $\pi_i = r+1-i$ for $r-\Delta$ consecutive numbers starting at some position $i$, where $1 \leq i \leq \Delta+1$ and
$\pi_i = s+2r+1-i$ for $s-\Delta$ consecutive numbers starting at some position $i$, where $r+1 \leq i \leq r+\Delta+1$.
\end{itemize}
\end{corollary}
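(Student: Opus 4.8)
The plan is to extract the claimed structure directly from Theorem~\ref{thm:permgivesuniquejpimin}, which already tells us that every $\pi \in W_\lambda^\Delta$ has the form $\pi = J(\pi_0) = \nu \circ \pi_0 \circ \mu$ for a minimal permutation $\pi_0$ and a valid jump partition $J = (\mu,\nu)$ with $|\mu| + |\nu| = \Delta$. Since there are at most two minimal permutations, namely $\pi_1 = (s,\dots,1\mid s+r,\dots,s+1)$ and $\pi_2 = (r,\dots,1\mid s+r,\dots,r+1)$ (equal when $s=r$, in which case the two displayed bullets also coincide), it suffices to show that the choice $\pi_0 = \pi_1$ yields the first bullet and $\pi_0 = \pi_2$ the second. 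I will only treat $\pi_0 = \pi_1$; the other case is the same computation with $s$ and $r$ exchanged in the first block.

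The key step is to pin down which entries of $\pi_1$ survive the action of $J$. Exactly as in the proof of Lemma~\ref{lem:innerouterdisjoint}, the set of values not fixed by the action of $\mu$ on $\pi_1$ is $M = \{1,\dots,\ell(\mu)\}\cup\{s+r-\mu_1+1,\dots,s+r\}$, and the set of values not fixed by the action of $\nu$ is $N = \{s-\ell(\nu)+1,\dots,s+\nu_1\}$. Moreover, the transpositions making up $\mu$ are position swaps and those making up $\nu$ are value swaps, and in both cases the list of transpositions is determined by the composition $c$ and the partitions $\mu,\nu$ alone; hence any value $v\notin M\cup N$ keeps both its value and its position throughout $\nu\circ\pi_1\circ\mu$. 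Because $\ell(\mu)+\ell(\nu)\le|\mu|+|\nu|=\Delta<r\le s$ and $\mu_1+\nu_1\le\Delta<r$, Lemma~\ref{lem:innerouterdisjoint} gives $M\cap N=\emptyset$, and reading off positions in $\pi_1$ we find that $\pi_i=(\pi_1)_i=s+1-i$ on the interval $i\in[\ell(\nu)+1,\,s-\ell(\mu)]$ (the first block, minus its $\ell(\nu)$ leftmost and $\ell(\mu)$ rightmost entries) and $\pi_i=(\pi_1)_i=2s+r+1-i$ on the interval $i\in[s+\mu_1+1,\,s+r-\nu_1]$ (the second block, minus its $\mu_1$ leftmost and $\nu_1$ rightmost entries).

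To conclude, I truncate these two intervals to the exact lengths demanded. The first has length $s-\ell(\mu)-\ell(\nu)\ge s-\Delta$, so it contains the $s-\Delta$ consecutive positions starting at $\ell(\nu)+1$, and indeed $\ell(\nu)+1\le\Delta+1$ while $\ell(\mu)+\ell(\nu)\le\Delta$ keeps the truncated block inside the interval. The second has length $r-\mu_1-\nu_1\ge r-\Delta$, so it contains the $r-\Delta$ consecutive positions starting at $s+\mu_1+1$, and $s+1\le s+\mu_1+1\le s+\Delta+1$ while $\mu_1+\nu_1\le\Delta$ keeps the truncated block inside. This is exactly the first bullet, and $\pi_0=\pi_2$ gives the second. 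The only point requiring genuine (if mild) care is the claim that a value outside $M\cup N$ is untouched in \emph{both} value and position by the composite action $\nu\circ\pi_1\circ\mu$ --- that is, that applying $\nu$ after $\mu$ cannot disturb an entry that $\mu$ left fixed --- which is where one uses that the transpositions of a jump partition depend only on $c$, $\mu$, and $\nu$, not on the permutation being acted upon; everything else is the inequalities already exploited in Lemma~\ref{lem:innerouterdisjoint}.
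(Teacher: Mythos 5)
Your proposal is correct and follows essentially the same route as the paper's own proof: decompose $\pi = J(\pi_0)$ via Theorem~\ref{thm:permgivesuniquejpimin}, identify the moved values via the sets $M$ and $N$ from Lemma~\ref{lem:innerouterdisjoint}, and read off the surviving consecutive blocks from the explicit form of $\pi_1$ (resp.\ $\pi_2$). You simply carry out the interval and position-bound bookkeeping in more detail than the paper does.
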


\begin{proof}
From Theorem~\ref{thm:permgivesuniquejpimin}, there is a unique permutation $\pi_0$, which is either $\pi_1$ or $\pi_2$ in \eqref{minperms-twocol}, and a unique jump partition $J = (\mu,\nu)$ such that $\pi = J(\pi_0)$. The two cases correspond to these minimal permutations. We will argue the first case in which $\pi_0 = \pi_1$; the second one is similar.

Using the terminology of Lemma~\ref{lem:innerouterdisjoint}, the elements in the first block which are not fixed are 
$\{1,\dots,\ell(\mu),s-\ell(\nu)+1,\dots,s\}$. But since $\ell(\mu) + \ell(\nu) < \Delta < r \leq s$, there are at least $s-\Delta$ consecutive elements in $\pi_0$ which are undisturbed. A similar argument works for the second block
Finally, recall that $(\pi_0)_i = s+1-i$ for $1 \leq i \leq s$ and $(\pi_0)_i = 2s+r+1-i$ for $s+1 \leq i \leq s+r$.
\end{proof}

\section*{Acknowledgements}
We thank Amitabh Basu and Jim Haglund for discussions.
We also thank the anonymous referee for many useful comments.
The first author acknowledges support from the UGC Centre for Advanced Studies. and from DST grant DST/INT/SWD/VR/P-01/2014.
The second author was supported by NSF grant DMS-1261010 and a Sloan Research Fellowship. She would like to thank Nati Linial for discussions about this work in its initial stages.

\bibliographystyle{alpha}
\bibliography{minimal}

\end{document}